\numberwithin{equation}{section}
\theoremstyle{definition}
\numberwithin{equation}{section}
\newcommand{\ncom}{\newcommand}
\ncom{\beq}{\begin{equation}}
\ncom{\eeq}{\end{equation}}
\ncom{\bea}{\begin{eqnarray*}}
\ncom{\eea}{\end{eqnarray*}}
\ncom{\beqa}{\begin{eqnarray}}
\ncom{\eeqa}{\end{eqnarray}}
\ncom{\nno}{\nonumber}
\ncom{\non}{\nonumber}
\ncom{\ds}{\displaystyle}
\ncom{\half}{\frac{1}{2}}
\ncom{\mbx}{\makebox{.25cm}}
\ncom{\hs}{\mbox{\hspace{.25cm}}}
\ncom{\rar}{\rightarrow}
\ncom{\Rar}{\Rightarrow}
\ncom{\noin}{\noindent}
\ncom{\bc}{\begin{center}}
\ncom{\ec}{\end{center}}
\ncom{\sz}{\scriptsize}
\ncom{\rf}{\ref}
\ncom{\s}{\sqrt{2}}
\ncom{\sgm}{\sigma}
\ncom{\Sgm}{\Sigma}
\ncom{\psgm}{\sigma^{\prime}}
\ncom{\dt}{\delta}
\ncom{\Dt}{\Delta}
\ncom{\lmd}{\lambda}
\ncom{\Lmd}{\Lambda}
\ncom{\Th}{\Theta}
\ncom{\e}{\eta}
\ncom{\eps}{\epsilon}
\ncom{\pcc}{\stackrel{P}{>}}
\ncom{\lp}{\stackrel{L_{p}}{>}}
\ncom{\dist}{{\rm\,dist}}
\ncom{\sspan}{{\rm\,span}}
\ncom{\re}{{\rm Re\,}}
\ncom{\im}{{\rm Im\,}}
\ncom{\sgn}{{\rm sgn\,}}
\ncom{\ba}{\begin{array}}
\ncom{\ea}{\end{array}}
\ncom{\hone}{\mbox{\hspace{1em}}}
\ncom{\htwo}{\mbox{\hspace{2em}}}
\ncom{\hthree}{\mbox{\hspace{3em}}}
\ncom{\hfour}{\mbox{\hspace{4em}}}
\ncom{\vone}{\vskip 2ex}
\ncom{\vtwo}{\vskip 4ex}
\ncom{\vonee}{\vskip 1.5ex}
\ncom{\vthree}{\vskip 6ex}
\ncom{\vfour}{\vspace*{8ex}}
\ncom{\norm}{\|\;\;\|}
\ncom{\integ}[4]{\int_{#1}^{#2}\,{#3}\,d{#4}}
\ncom{\vspan}[1]{{{\rm\,span}\{ #1 \}}}
\ncom{\dm}[1]{ {\displaystyle{#1} } }
\ncom{\ri}[1]{{#1} \index{#1}}
\newtheorem{theorem}{\bf Theorem}[section]
\newtheorem{remark}{\bf Remark}[section]
\newtheorem{proposition}{Proposition}[section]
\newtheoremstyle
    {remarkstyle}
    {}
    {11pt}
    {}
    {}
    {\bfseries}
    {:}
    {     }
    {\thmname{#1} \thmnumber{#2} }
\theoremstyle{remarkstyle}
\def\eps{\varepsilon}
\begin{document}
\title{Time-changed generalized fractional Skellam process}
\author[Mostafizar Khandakar]{Mostafizar Khandakar}
\address{Mostafizar Khandakar, Department of Science and Mathematics, Indian Institute of Information Technology Guwahati, Bongora, Assam 781015, India.}
\email{mostafizar@iiitg.ac.in}
\author[Bratati Pal]{Bratati pal}
\address{Bratati pal, Department of Science and Mathematics, Indian Institute of Information Technology Guwahati, Bongora, Assam 781015, India.}
\email{bratati.pal@iiitg.ac.in}
\author[Palaniappan Vellaisamy]{Palaniappan Vellaisamy}
\address{Palaniappan Vellaisamy, Department of Statistics and Applied Probability, University of California, Santa Barbara, CA 93106, USA}
\email{pvellais@ucsb.edu}
\subjclass[2020]{Primary: 60G22; Secondary: 60G55}
\keywords{Skellam process; time-changed processes; L\'evy subordinator; inverse subordinator; long-range dependence property}
\date{\today}
\begin{abstract}
In this paper, we introduce and study two time-changed variants of the
generalized fractional Skellam process. These are obtained by time-changing the generalized fractional Skellam process with
an independent L\'evy subordinator with finite moments of any order and its inverse, respectively. We call  the introduced processes the time-changed generalized fractional Skellam process-I (TCGFSP-I) and the time-changed generalized fractional Skellam process-II (TCGFSP-II), respectively. The probability
generating function, moment generating function, moments, factorial moments, variance, covariance, {\it etc.}, are derived for the TCGFSP-I. We obtain a variant of the law of the iterated logarithm
for it and establish its long-range dependence property. Several special cases of the TCGFSP-I are
considered, and the associated system of governing differential equations is obtained.
Later, some distributional properties and particular cases are discussed for the TCGFSP-II.
\end{abstract}

\maketitle
\section{Introduction}

The generalized counting process (GCP), introduced by Di Crescenzo {\it et al.} (2016) is a L\'evy process. We denote it by $\{M(t)\}_{t\ge0}$ and it generalizes the Poisson process. It is a non-renewal process and it performs $k$ kinds of jumps of amplitude $1,2,\dots,k$ with positive rates $\lambda_{1}, \lambda_{2},\dots,\lambda_{k}$, respectively. Its transition probabilities in an infinitesimal
interval of length $h$ are given by 
\begin{equation*}
	\mathrm{Pr}\{M(t+h)=n|M(t)=m\}=\begin{cases*}
		1-\Lambda h+o(h),\ \  n=m,\\
		\lambda_{j}h+o(h), \ \ n=m+j,\ \ j=1,2,\dots,k,\\
		o(h),\ \ n>m+k,
	\end{cases*}
\end{equation*}
where $\Lambda=\lambda_{1}+\lambda_{2}+\dots+\lambda_{k}$ for a fixed positive integer $k$ and $o(h)\to 0$ as $h\to 0$. For $k=1$, it reduces to the Poisson process. 

A time-changed variant of the GCP, namely, the generalized fractional counting process (GFCP) $\{M^{\alpha}(t)\}_{t\ge0}$, $0<\alpha\le 1$ is also introduced by Di Crescenzo {\it et al.} (2016). It is obtained by time-changing the GCP by an independent inverse stable subordinator $\{Y_{\alpha}(t)\}_{t\ge0}$. That is,	$M^{\alpha}(t)=M\big(Y_{\alpha}(t)\big)$
where the GCP is independent of the inverse stable subordinator.  

For $\alpha=1$, the GFCP reduces to the GCP. Moreover, for $k=1$, it reduces to the time-changed variant of the Poisson process, namely, the time fractional Poisson process (see Mainardi {\it et al.} (2004), Beghin and
Orsingher (2009), Meerschaert {\it et al.} (2011)). For other time-changed variants of the Poisson process such as the space-fractional Poisson process and the space-time fractional Poisson process, we refer the reader to Orsingher and Polito (2012).  Kataria and Khandakar (2022a) have shown that many known counting processes such as the Poisson process of order $k$, the P\'olya-Aeppli process of order $k$, the negative binomial process, the convoluted Poisson process (see Kataria and Khandakar (2021)), {\it etc.} and their fractional variants  are special cases of the GFCP.

Recently,  Kataria and Khandakar (2022b) introduced and studied a Skellam type variant of the GCP, namely, the generalized Skellam process (GSP) $\{\mathcal{S}(t)\}_{t\ge0}$. It is defined as  
\begin{equation*}
	\mathcal{S}(t)\coloneqq M_{1}(t)-M_{2}(t),
\end{equation*}
where  $\{M_{1}(t)\}_{t\ge0}$ and $\{M_{2}(t)\}_{t\ge0}$ are two generalized counting processes with positive rates $\lambda_{j}$'s and $\mu_{j}$'s, $j=1,2,\dots,k$, respectively and they are independent of each other. It is an integer valued L\'evy process. For $k=1$, it reduces to the Skellam process (see Barndorff-Nielsen {\it et al.} (2012)). For $\lambda_{j}=\lambda$ and $\mu_{j}=\mu$, $j=1,2,\dots,k$, it reduces to the Skellam process of order $k$ (see Gupta {\it et al.} (2020a)). The state probabilities $p(n,t)=\mathrm{Pr}\{\mathcal{S}(t)=n\}$ of GSP satisfy the following system of differential equations:
\begin{equation}\label{digsp}
	\frac{\mathrm{d}}{\mathrm{d}t}p(n,t)=\Lambda\big(p(n-1,t)-p(n,t)\big)-\bar{\Lambda}\big(p(n,t)-p(n+1,t)\big),\ \ n\in \mathbb{Z},
\end{equation}
with initial conditions $p(0,0)=1$ and $p(n,0)=0$, $n\neq 0$. Here, $\bar{\Lambda}=\mu_{1}+\mu_{2}+\dots+\mu_{k}$.

 Kataria and Khandakar (2022b) considered a time-changed variant of the GSP, namely, the generalized fractional Skellam process (GFSP). Here, we denote it by $\{\mathcal{S}^{\alpha}(t)\}_{t\ge0}$, $0<\alpha\leq 1$. It is defined as 
\begin{equation}\label{def}
	\mathcal{S}^{\alpha}(t)\coloneqq
	\begin{cases*}
		\mathcal{S}(Y_{\alpha}(t)),\ \ 0<\alpha<1,\\
		\mathcal{S}(t),\ \ \alpha=1,
	\end{cases*}
\end{equation}
where the GSP is independent of the inverse stable subordinator. It is important to note that the GFSP is a Skellam type varint of the GFCP. The GFSP exhibits the long-range dependence (LRD) property whereas its increment
process has the short-range dependence (SRD) property. For $k=1$, the GFSP reduces to the fractional Skellam process of type II (see Kerss {\it et al.} (2014)). For $\lambda_{j}=\lambda$ and $\mu_{j}=\mu$, $j=1,2,\dots,k$, the GFSP reduces to the fractional Skellam process of order $k$ (see Kataria and Khandakar (2024)). Recently, Tathe and Ghosh (2025a) introduced and studied the non-homogeneous version of the GSP and the GFSP. For more details and some other generalizations of the GSP and the GFSP, we refer the reader to Cinque and Orsingher (2025), Vishwakarma (2025), Tathe and Ghosh (2025b), {\it etc}.

Note that the recently studied time-changed processes are mainly
obtained by time-changing a point process with an independent subordinator and
its inverse.  A subordinator \( \{D_f(t)\}_{t\geq0} \) is a one-dimensional L\'evy process whose sample paths are non-decreasing
and $D_f(0)=0$ almost surely (a.s.). Its Laplace transform is given by (see Applebaum (2009), Section 1.3.2)
\begin{equation}\label{lexp}
	\mathbb{E}(e^{-sD_f(t)}) = e^{-tf(s)}, \ \  s > 0,
\end{equation}
where
\begin{equation*}
	f(s) = c_1 + c_2 s + \int_0^{\infty} (1 - e^{-sx}) \bar{\nu}(\mathrm{d}x),  \ \ c_1 \geq 0,  \ \ c_2 \geq 0,
\end{equation*}
is called the Bern\v{s}tein function. Here, \(  \bar{\nu}(\cdot) \) is a non-negative L\'evy measure on \((0,\infty)\) that satisfies $  \displaystyle\int_0^{\infty} (x \wedge 1) \bar{\nu}(\mathrm{d}x) < \infty$. The first passage time of a  subordinator, that is, 
\begin{equation*}
	H_f(t)\coloneqq \inf \{x \geq 0 : D_f(x) > t\}, \ \  t \geq 0,
\end{equation*}
is called the inverse subordinator.

Orsingher and Toaldo (2015) introduced and studied  a class of point processes which is obtained by time-changing the Poisson process with an independent
L\'evy subordinator. It includes many processes such as the space-fractional Poisson process,  negative binomial process, tempered fractional
Poisson process, {\it etc.}, as particular cases. Maheshwari and Vellaisamy (2019)
introduced  two time-changed variants of the time fractional Poisson process where the time-changes are done with an independent
L\'evy subordinator and its inverse. Recently, similar studies have been considered for the space-time fractional Poisson process (see Kataria and Khandakar (2022c)), the GFCP (see Kataria and Khandakar (2022b)) and the fractional Skellam process of order $k$ (see Kataria and Khandakar (2024)).

In this paper, our aim is to explore some time-changed variants of the GSP and GFSP. We give some preliminary results in Section \ref{section2} that will be used later. In Section \ref{3}, we introduce a time-changed variant of the GFSP, namely, the time-changed generalized fractional Skellam process-I (TCGFSP-I) by time-changing the GFSP with an independent L\'evy subordinator whose moments are finite. For $\alpha=1$, the TCGFSP-I reduces to  a time-changed version of the GSP, namely, the time-changed  generalized  Skellam process-I (TCGSP-I). We obtain the probability mass function (pmf) of TCGSP-I. The probability generating function (pgf), factorial moments,  variance, covariance, {\it etc.} of the TCGFSP-I are derived. Also, we obtain the moment generating function (mgf) and the moments of GFSP, using which the mgf and the moments of TCGFSP-I are derived. It is shown that the one-dimensional distributions of TCGFSP-I are not infinitely divisible and it has the overdispersion property. We establish a version of the law of iterated logarithm for it. We have shown that under certain conditions on the L\'evy subordinator, the TCGFSP-I exhibits the LRD property. It is important to note that the processes that exhibit the LRD property have applications in several areas such as finance, econometrics, hydrology, internet data traffic modeling, {\it etc.} Some particular cases of
TCGFSP-I are discussed by taking specific L\'evy subordinator such as the gamma subordinator,
tempered stable subordinator and inverse Gaussian subordinator. We derive
the system of differential equations that governs the state probabilities of these particular cases. Moreover, we have proved that the increment process of GFSP time-changed by a gamma subordinator has the SRD property.

In Section \ref{4}, another time-changed version of the GFSP, namely, the time-changed
generalized fractional Skellam process-II (TCGFSP-II) is considered. It
is obtained by time-changing the GFSP by an independent inverse subordinator. We obtain the mean, variance, covariance, pgf, {\it etc.} for the TCGFSP-II. For $\alpha=1$, the TCGFSP-II reduces to the  time-changed version of the GSP, namely, the 
generalized Skellam process-II (TCGSP-II). We derive the system of differential equation that governs the marginal distributions of TCGSP-II.
Some
particular cases of the TCGSP-II are also discussed.

\section{Preliminaries}\label{section2}
Here, we give some preliminary details about Mittag-Leffler function, L\'evy subordinator and its inverse, fractional derivatives, stable and inverse stable subordinator, the GSP and its time-changed variant, and the definition of LRD property. 

\subsection{Mittag-Leffler function}
The three-parameter Mittag-Leffler function is defined as (see Kilbas {\it et al.} (2006), Eq. (1.9.1))
\begin{equation}\label{mitag}
	E_{\alpha,\beta}^{\gamma}(x)\coloneqq\frac{1}{\Gamma(\gamma)}\sum_{j=0}^{\infty} \frac{\Gamma(j+\gamma)x^{j}}{j!\Gamma(j\alpha+\beta)},\ \ x\in\mathbb{R},
\end{equation}
where $\alpha>0$, $\beta>0$ and $\gamma>0$. It reduces to two-parameter Mittag-Leffler function for $\gamma=1$ and it further reduces to Mittag-Leffler function for $\gamma=\beta=1$.

\subsection{Subordinator and its inverse}
First, recall the following definition:

We say that a positive function $f(t)$ is asymptotically equal to a positive function $g(t)$, written as $f(t)\sim g(t)$, as $t\to \infty$, if $\lim\limits_{t\to \infty}f(t)/g(t) =1$.

Assume that $ 0 < \alpha < 1 $, $ 0 < s \le t $ and $ s $ be fixed. Then, the following limiting result hold as $ t \to \infty $ (see Maheshwari and Vellaisamy (2019), Theorem 3.3):
\begin{equation}\label{  Levyasymbeta}
	\alpha \mathbb{E}(D_f^{2\alpha}(t))B(\alpha, 1+\alpha; D_f(s)/D_f(t))\sim \mathbb{E}(D_f^\alpha(s)) \mathbb{E}(D_f^\alpha(t-s)),
\end{equation}
where $\mathbb{E}(D_f^\alpha(t)) <\infty $ for $ 0< t< \infty$ and  $\mathbb{E}(D_f^\alpha(t))  \to \infty $ as $ t \to \infty$. Here, $B(\alpha,\alpha+1;D_f(s)/D_f(t))$ is the incomplete beta function. 

Toaldo (2015) studied a convolution-type derivative  with respect to Bern\v{s}tein functions which generalize the Caputo
fractional derivative and Riemann-Liouville fractional derivative. The generalized Caputo derivative for an absolutely continuous function $w(\cdot)$ is defined as follows (see Toaldo (2015), Definition 2.4):
\begin{equation}\label{gcaputo}
	{}^{f}\mathscr{D}_{t} w(t) \coloneqq c_2 \frac{\mathrm{d}}{\mathrm{d}t} w(t) + \int_{0}^{t} \frac{\partial}{\partial t} w(t-s) \nu(s) \mathrm{d}s,
\end{equation}
where  $\nu(s)=c_1+\bar{\nu}(s, \infty)$ is the  tail of the L\'evy measure $\bar{\nu}(\cdot)$ associated with the Bern\v{s}tein function $f$. 

The generalized Caputo derivative is related with
the generalized Riemann-Liouville derivative $ {}^{f}\mathbb{D}_t $ as follows (see Toaldo (2015), Proposition 2.7)
\begin{equation}\label{2.27}
	{}^{f}\mathbb{D}_t w(t) = {}^{f}\mathscr{D}_t w(t) + \nu(t) w(0).
\end{equation}

For $\gamma\geq 0$, the Riemann-Liouville fractional derivative is defined as (see Kilbas {\it et al.} (2006))
\begin{equation}\label{RLd}
	\mathbb{D}_t^{\gamma}w(t):=\left\{
	\begin{array}{ll}
		\dfrac{1}{\Gamma{(m-\gamma)}}\displaystyle \frac{\mathrm{d}^{m}}{\mathrm{d}t^{m}}\int^t_{0} \frac{w(s)}{(t-s)^{\gamma+1-m}}\,\mathrm{d}s,\ \ m-1<\gamma<m,\\\\
		\displaystyle\frac{\mathrm{d}^{m}}{\mathrm{d}t^{m}}w(t),\ \ \gamma=m,
	\end{array}
	\right.
\end{equation}
where $m$ is a positive integer.

 The density function $l_f(x,t) $ of $\{	H_f(t)\}_{t\ge0}$ satisfies (see Toaldo (2015), Theorem 4.1)
\begin{equation}\label{2.28}
	{}^{f}\mathbb{D}_t l_f(x,t) = -\frac{\partial}{\partial x} l_f(x,t),
\end{equation}
with
\begin{equation*}
	l_f(x/c_2,t) = 0, \ \ l_f(0,t) = \nu(t), \ \ l_f(x,0) = \delta(x),
\end{equation*}
provided that the following condition holds:

Condition \textbf{I}. 
$ \bar{\nu}(0, \infty) = \infty$ and   $ \nu(s) = c_1 + \bar{\nu}(s, \infty)$ is absolutely continuous.

\subsection{Stable and inverse stable subordinator}
A stable subordinator $\{D_{\alpha}(t)\}_{t\ge0}$, $0<\alpha<1$, is a non-decreasing  L\'evy process with Bern\v{s}tein function $f(s)=s^\alpha$. Its first passage time is defined as 
\begin{equation*}
	Y_{\alpha}(t)\coloneqq \inf\{x\ge0:D_{\alpha}(x)>t\}.
\end{equation*}
The process $\{Y_{\alpha}(t)\}_{t\ge0}$ is called the inverse stable subordinator and its Laplace transform is given by
\begin{equation}\label{lpinv}
	\mathbb{E}\big(e^{-sY_{\alpha}(t)}\big)=E_{\alpha,1}({-st^{\alpha}}).
\end{equation}
It is a self-similar process (see Meerschaert and Scheffler (2004)), that is,
\begin{equation}\label{selfsimi}
	Y_{\alpha}(t)\overset{d}{=}t^{\alpha}Y_{\alpha}(1),
\end{equation} 
where $\overset{d}{=}$ denotes the equality in distribution. The mean and covariance of $\{Y_{\alpha}(t)\}_{t\ge0}$ are given by (see Leonenko {\it et al.} (2014))
\begin{align}\label{meani}
	\mathbb{E}\big(Y_{\alpha}(t)\big)&=\frac{t^{\alpha}}{\Gamma(\alpha+1)},\\
	\operatorname{Cov}\big(Y_{\alpha}(s),Y_{\alpha}(t)\big)&=\frac{1}{\Gamma^2(\alpha+1)}\big( \alpha s^{2\alpha}B(\alpha,\alpha+1)+F(\alpha;s,t)\big), \ \ 0<s\le t \label{covin},
\end{align}
where $F(\alpha;s,t)=\alpha t^{2\alpha}B(\alpha,\alpha+1;s/t)-(ts)^{\alpha}$. Here, $B(\alpha,\alpha+1)$  and $B(\alpha,\alpha+1;s/t)$ denote the beta function and the incomplete beta function, respectively. 
 
\subsection{The GSP and its time-changed variant}
Here, we give some known results for the GSP and its time-changed variant, namely, the GFSP (see Kataria and Khandakar (2022b)). 

The state probabilities of GSP are given by
\begin{equation}\label{pmfgsp}
	p(n,t)=e^{-(\Lambda+\bar{\Lambda})t}\big(\Lambda/\bar{\Lambda}\big)^{n/2}I_{|n|}\big(2t\sqrt{\Lambda \bar{\Lambda}}\big), \ \ n\in\mathbb{Z}, 
\end{equation}
where $I_{n}(\cdot)$ is the modified Bessel function of first kind and it is defined as (see Sneddon (1956), p. 114)
\begin{equation*}
	I_{n}(x)=\sum_{j=0}^{\infty}\frac{(x/2)^{2j+n}}{j!(j+n)!},\  \ x\in\mathbb{R}.
\end{equation*} 

The mgf  $\mathcal{M}_{\mathcal{S}}(u,t)=\mathbb{E}(e^{u\mathcal{\mathcal{S}}(t)})$ of GSP is given by (see Tathe and Ghosh (2025a), Eq. (3.14))
\begin{equation}\label{mgfgsp}
	\mathcal{M}_{\mathcal{S}}(u,t)=\exp{\Big(\sum_{j=1}^{k}\big(\lambda_{j}(e^{uj}-1)+\mu_{j}(e^{-uj}-1)\big)t \Big)},\ \ u \in \mathbb{R}.
\end{equation}
The following limiting result holds for it:
\begin{equation}\label{limitS}
	\lim\limits_{t\to\infty}\frac{\mathcal{S}(t)}{t}=\sum_{j=1}^{k}j(\lambda_{j}-\mu_{j}),\ \ \text{in probability}.
\end{equation}
The pgf $G_{\mathcal{S}^\alpha}(u,t)=\mathbb{E}(u^{\mathcal{S}^\alpha(t)})$ of GFSP  is given by
\begin{equation}\label{pgfgfsp}
	G_{\mathcal{S}^\alpha}(u,t)=	E_{\alpha,1}\Big(\sum_{j=1}^{k}\Big(\lambda_{j}(u^{j}-1)+\mu_{j}(u^{-j}-1)\Big)t^{\alpha}\Big), \ \ |u| \le 1.
\end{equation}
The mean and  covariance of GFSP are given by
\begin{align}
	\mathbb{E}\big(\mathcal{S}^{\alpha}(t)\big)&=m_{1}\mathbb{E}\big(Y_{\alpha}(t)\big),\label{mgfsp}\\
	\operatorname{Cov}\big(\mathcal{S}^{\alpha}(s),\mathcal{S}^{\alpha}(t)\big)&=m_{2}\mathbb{E}\big(Y_{\alpha}(s)\big)+m_{1}^{2}\operatorname{Cov}\big(Y_{\alpha}(s),Y_{\alpha}(t)\big), \ \  0<s\leq t,\label{cgfsp}
\end{align}
where
\begin{equation}\label{r1r2}
	m_{1}=\sum_{j=1}^{k}j(\lambda_{j}-\mu_{j}) \ \  \text{and}\ \  m_{2}=\sum_{j=1}^{k}j^{2}(\lambda_{j}+\mu_{j}).
\end{equation}
\subsection{The LRD and SRD properties} 

The following definition of LRD and SRD property will be used (see  Maheshwari and Vellaisamy (2016)):

Let $s>0$ be fixed and $\{X(t)\}_{t\ge0}$ be a non-stationary stochastic process whose correlation function has the following asymptotic behaviour: 
\begin{equation*}
	\operatorname{Corr}(X(s),X(t))\sim c(s)t^{-\gamma},\ \ \text{as}\ t\to\infty,
\end{equation*}
for some $c(s)>0$. Then, the process $\{X(t)\}_{t\ge0}$ is said to exhibit the LRD property if $\gamma\in(0,1)$ and the SRD property if $\gamma\in (1,2)$.

\section{Time-changed generalized fractional Skellam process}\label{3}
In this section, we introduce a time-changed variant of the GFSP. We call it the time-changed generalized fractional Skellam process-I (TCGFSP-I) and denote it by $\{\mathcal{Z}_{f}^{\alpha}(t)\}_{t\ge0}$, $0<\alpha\le1$. It is obtained by time-changing the GFSP with an independent L\'evy subordinator $\{D_f (t)\}_{t\ge0}$  whose moments are finite, that is, $\mathbb{E}\big(D_f^r(t)\big) < \infty \text{ for all } r > 0 $. That is,  
\begin{equation}\label{qws11ww1}
	\mathcal{Z}_{f}^{\alpha}(t)\coloneqq \mathcal{S}^{\alpha}(D_f (t))=\mathcal{S}(Y_{\alpha}(D_f (t))),
\end{equation}
where the GFSP $\{\mathcal{S}^{\alpha}(t)\}_{t\ge0}$ is independent of $\{D_f (t)\}_{t\ge0}$.

For $\alpha=1$, the TCGFSP-I reduces to  a time-changed variant of the GSP, namely, the time-changed  generalized  Skellam process-I (TCGSP-I) $\{\mathcal{Z}_{f}(t)\}_{t\ge0}$, that is, 
\begin{equation}\label{zft}
	\mathcal{Z}_{f}(t)\coloneqq \mathcal{S}(D_f (t)).
\end{equation}

Note that the TCGFSP-I is non-Markovian and is also not a L\'evy process. Since it has the inverse stable subordinator $\{Y_\alpha(t)\}_{t\ge 0}$ as a time-changed component, which in general, is non-Markovian with non-stationary and non-independent increments (see Veillette and Taqqu (2010)). However, the TCGSP-I is a  L\'evy process since it is a composition of two independent L\'evy process.

For $\lambda_{j}=\lambda$ and $\mu_{j}=\mu$, $j=1,2,\dots,k$, the TCGFSP-I reduces to the time-changed fractional Skellam process of order $k$ (see Kataria and Khandakar (2024)), and the TCGSP-I reduces to the time-changed Skellam process of order $k$ (see Gupta {\it et al.} (2020a)). It is known that the convoluted fractional Poisson process is a limiting process of GFCP (see Kataria and Khandakar (2021)). Sengar and Upadhye (2022) introduced and studied the Skellam version of the convoluted fractional Poisson process, namely, the fractional convoluted Skellam process by taking the difference of two independent convoluted fractional Poisson processes with different intensities. Also, they consider the time-changed variant of the fractional convoluted Skellam process in which the time-change is done with an independent L\'evy subordinator. Thus, the time-changed fractional convoluted Skellam process can be obtained as a limiting process of the TCGFSP-I. Furthermore, for $k=1$, the TCGFSP-I and the TCGSP-I reduces to a time-changed variant of the fractional Skellam process of type II (see Kerss {\it et al.} (2014)) and the Skellam process, respectively. 
  
\begin{remark} 
	Lee and Macci (2023) showed that the Skellam process can be seen as a compound Poisson process. Similarly, it can be seen that the GSP is equal in distribution to the following compound Poisson process:
	\begin{equation*}
		\mathcal{S}(t)\overset{d}{=}\sum_{i=1}^{N(t)}X_{i}, \ \ t\ge0,
	\end{equation*}
	where $\{N(t)\}_{t\ge0}$ is a Poisson process with intensity $\Lambda+\bar{\Lambda}$ and it is independent of the sequence of independent and identically distributed random variables $\{X_{i}\}_{i\ge1}$ such that 
	\begin{equation*}
		\mathrm{Pr}\{X_{1}=j\}=\frac{\lambda_{j}}{\Lambda+\bar{\Lambda}} \ \text{and} \ \mathrm{Pr}\{X_{1}=-j\}=\frac{\mu_{j}}{\Lambda+\bar{\Lambda}},\ j=1,2,\dots,k.
	\end{equation*}
	Thus, it follows that
	\begin{equation*}
		\mathcal{S}^{\alpha}(t)\overset{d}{=}\sum_{i=1}^{N^{\alpha}(t)}X_{i},\ \ t\ge0,
	\end{equation*}
	where $\{N^{\alpha}(t)\}_{t\ge0}$ is a time fractional Poisson process with intensity $\Lambda+\bar{\Lambda}$.
	Thus, \eqref{zft} can be written as
	\begin{equation*}
		\mathcal{Z}_f(t)\overset{d}{=}\sum_{i=1}^{N(D_f(t))}X_{i},\ \ t\ge0,  
	\end{equation*}
	where $\{N(D_f(t))\}_{t\ge 0}$ is a Poisson process subordinated by an independent L\'evy subordinator which is introduced and studied by Orsingher and Toaldo (2015).
	Similarly, from \eqref{qws11ww1} we get
	\begin{equation*}
		\mathcal{Z}_f^\alpha(t)\overset{d}{=}\sum_{i=1}^{N^\alpha(D_f(t))}X_{i},\ \ t\ge0,
	\end{equation*}
	where $\{N^\alpha (D_f(t))\}_{t\ge 0}$ is a time-changed time fractional Poisson process (see Maheshwari and Vellaisamy (2019)).
\end{remark}
In the following result, we obtain the pmf of TCGSP-I.
\begin{theorem}\label{thpmfI}The state probabilities 
	$p_{f}(n,t)=\mathrm{Pr}\{\mathcal{Z}_{f}(t)=n\}$ of TCGSP-I are given by
	\begin{equation}\label{pmf_tcspok}
		p_{f}(n,t)=\sum_{m=\max(0,-n)}^{\infty} \frac{\Lambda^{m+n} \bar{\Lambda}^{m} }{(m+n)! m!} \mathbb{E}\big(e^{-(\Lambda+\bar{\Lambda})D_{f}(t)} D_{f}^{2m+n}(t)\big), \ \ n \in \mathbb{Z}.
	\end{equation}
\end{theorem}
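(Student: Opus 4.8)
The plan is to condition on the subordinator and reduce to the known pmf of the GSP. Since $\{D_f(t)\}_{t\ge0}$ is independent of the GSP, conditioning on $D_f(t)=x$ gives $\mathrm{Pr}\{\mathcal{S}(D_f(t))=n\mid D_f(t)=x\}=p(n,x)$, where $p(n,\cdot)$ is the GSP state probability in \eqref{pmfgsp}. Taking the expectation over the law of $D_f(t)$ then yields $p_f(n,t)=\mathbb{E}\big(p(n,D_f(t))\big)$, and everything reduces to rewriting $p(n,x)$ in a form convenient for taking expectations.

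Next I would expand the modified Bessel function in \eqref{pmfgsp}. Writing out $I_{|n|}\big(2x\sqrt{\Lambda\bar{\Lambda}}\big)$ via its defining series and combining the prefactor $(\Lambda/\bar{\Lambda})^{n/2}$ with the powers $(\Lambda\bar{\Lambda})^{\,\cdot}$ coming from each term, the half-integer powers cancel and one obtains, for every $n\in\mathbb{Z}$,
\[
p(n,x)=e^{-(\Lambda+\bar{\Lambda})x}\sum_{m=\max(0,-n)}^{\infty}\frac{\Lambda^{m+n}\bar{\Lambda}^{m}}{(m+n)!\,m!}\,x^{2m+n}.
\]
The lower limit $\max(0,-n)$ arises naturally from the two sign regimes of $n$: for $n\ge0$ the Bessel summation index runs from $0$ and the identification $m=j$ gives the stated form directly, whereas for $n<0$ one has $I_{|n|}=I_{-n}$ and the substitution $m=j-n$ (with $j$ the Bessel index) shifts the start to $-n$ and turns the factorials $j!$ and $(j+|n|)!$ into $(m+n)!$ and $m!$, respectively. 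I would record these two cases separately to keep the bookkeeping of the powers of $\Lambda$ and $\bar{\Lambda}$ transparent.

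Finally I would substitute $x=D_f(t)$ and interchange the infinite sum with the expectation. Because all summands are nonnegative (each is the expectation of a nonnegative random variable), Tonelli's theorem licenses the interchange with no integrability hypothesis, giving
\[
p_f(n,t)=\sum_{m=\max(0,-n)}^{\infty}\frac{\Lambda^{m+n}\bar{\Lambda}^{m}}{(m+n)!\,m!}\,\mathbb{E}\big(e^{-(\Lambda+\bar{\Lambda})D_f(t)}D_f^{2m+n}(t)\big),
\]
which is the claimed identity. Each expectation is finite by the standing assumption that $D_f$ has finite moments of all orders; indeed the factor $e^{-(\Lambda+\bar{\Lambda})D_f(t)}\le1$ makes finiteness immediate from $\mathbb{E}\big(D_f^{2m+n}(t)\big)<\infty$. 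The only point requiring genuine care is the algebraic simplification of the Bessel series in the two sign regimes of $n$ and the resulting index shift; once the powers of $\Lambda$ and $\bar{\Lambda}$ are matched, the interchange of summation and expectation and the moment finiteness are routine.
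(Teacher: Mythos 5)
Your proposal is correct and follows essentially the same route as the paper's own proof: conditioning on $D_f(t)$ via independence to get $p_f(n,t)=\int_0^\infty p(n,x)h_f(x,t)\,\mathrm{d}x$, expanding the modified Bessel function in \eqref{pmfgsp} so that the powers of $\Lambda$ and $\bar{\Lambda}$ combine into the series starting at $m=\max(0,-n)$, and then interchanging summation with expectation. The only difference is that you spell out the index-shift bookkeeping for the two sign regimes of $n$ and justify the interchange by Tonelli's theorem, details the paper leaves implicit.
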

\begin{proof} 
	Let $h_{f}(x,t)$ be the probability density function (pdf) of $\{D_f (t)\}_{t\ge 0}$ and recall that $p(n,t)$ is the pmf of GSP  given in \eqref{pmfgsp}. From (\ref{zft}), we can write 
	\begin{align*}
		p_{f}(n,t)&=\int_{0}^{\infty}p(n,x)h_{f}(x,t) \mathrm{d}x\\
		&=\int_{0}^{\infty}e^{-(\Lambda+\bar{\Lambda})x}{\big(\Lambda/\bar{\Lambda}\big)}^{n/2}I_{|n|}\big(2x\sqrt{\Lambda \bar{\Lambda}}\big)h_{f}(x,t)\mathrm{d}x\\
		&=\sum_{m=\max(0,-n)}^{\infty} \frac{\Lambda^{m+n} \bar{\Lambda}^{m} }{(m+n)! m!} \int_{0}^{\infty} e^{-(\Lambda+\bar{\Lambda})x}x^{2m+n} h_{f}(x,t) \mathrm{d}x\\
		&=\sum_{m=\max(0,-n)}^{\infty} \frac{\Lambda^{m+n} \bar{\Lambda}^{m} }{(m+n)! m!} \mathbb{E}\big(e^{-(\Lambda+\bar{\Lambda})D_{f}(t)} D_{f}^{2m+n}(t)\big).
	\end{align*}
	This completes the proof.
\end{proof}
\begin{remark}
	On substituting $\lambda_{j}=\lambda$, $\mu_{j}=\mu$, $j=1,2,\dots,k$ in (\ref{pmf_tcspok}), we get
	\begin{equation*}
		p_{f}(n,t)\big|_{\lambda_{j}=\lambda, \mu_{j}=\mu}=\sum_{m=\max(0,-n)}^{\infty} \frac{(k\lambda)^{m+n} (k\mu)^{m} }{(m+n)! m!} \mathbb{E}\big(e^{-k(\lambda+\mu)D_{f}(t)} D_{f}^{2m+n}(t)\big),\ \ n\in \mathbb{Z},
	\end{equation*}
	which agrees  with the pmf of time-changed Skellam process of order $k$ (see Gupta {\it et al.} (2020a), Eq. (47)).
\end{remark}
Now, we obtain the pgf $G^\alpha_{f}(u,t)=\mathbb{E}(u^{\mathcal{Z}_f^\alpha(t)})$ of TCGFSP-I.
\begin{theorem}\label{thpgfI}
	The pgf of TCGFSP-I is given by \begin{equation}\label{pgffsp}
		G^\alpha_{f}(u,t) =\sum_{n=0}^{\infty}\frac{\big(\sum_{j=1}^{k}\left(\lambda_{j}(u^{j}-1)+\mu_{j}(u^{-j}-1)\right)\big)^n}{\Gamma(n\alpha +1 )}\mathbb{E}\big((D_f(t))^{n\alpha }\big),\ \ |u|\le1.
	\end{equation}
\end{theorem}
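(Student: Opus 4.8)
The plan is to condition on the value of the subordinator and use the independence of the GFSP $\{\mathcal{S}^\alpha(t)\}_{t\ge0}$ and the L\'evy subordinator $\{D_f(t)\}_{t\ge0}$. Denoting by $h_f(x,t)$ the density of $D_f(t)$, the law of total expectation gives
\begin{align*}
G^\alpha_f(u,t)=\mathbb{E}\big(u^{\mathcal{S}^\alpha(D_f(t))}\big)=\int_0^\infty\mathbb{E}\big(u^{\mathcal{S}^\alpha(x)}\big)h_f(x,t)\,\mathrm{d}x=\mathbb{E}\big(G_{\mathcal{S}^\alpha}(u,D_f(t))\big),
\end{align*}
where $G_{\mathcal{S}^\alpha}(u,\cdot)$ is the pgf of the GFSP recorded in \eqref{pgfgfsp}. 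This is the structural backbone: once it is in place, everything reduces to computing the expectation of a Mittag-Leffler function evaluated at $D_f(t)$.

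Next I would insert the explicit form \eqref{pgfgfsp} and expand the one-parameter Mittag-Leffler function through its defining series \eqref{mitag} (taken at $\gamma=\beta=1$), i.e.\ $E_{\alpha,1}(z)=\sum_{n\ge0}z^n/\Gamma(n\alpha+1)$. Writing $c(u)\coloneqq\sum_{j=1}^{k}\big(\lambda_j(u^j-1)+\mu_j(u^{-j}-1)\big)$, this yields, for each fixed $x$,
\begin{align*}
G_{\mathcal{S}^\alpha}(u,x)=\sum_{n=0}^{\infty}\frac{c(u)^n\,x^{n\alpha}}{\Gamma(n\alpha+1)}.
\end{align*}
Substituting $x=D_f(t)$ and pulling the expectation inside the sum produces $\sum_{n\ge0}c(u)^n\mathbb{E}\big((D_f(t))^{n\alpha}\big)/\Gamma(n\alpha+1)$, which is exactly \eqref{pgffsp}.

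The only non-formal step, and the one I expect to be the main obstacle, is justifying the interchange of the expectation with the infinite sum. Here I would invoke the standing hypothesis $\mathbb{E}(D_f^r(t))<\infty$ for all $r>0$, which guarantees that every coefficient $\mathbb{E}\big((D_f(t))^{n\alpha}\big)$ is finite. To legitimize the interchange via Fubini--Tonelli (applied to the partial sums together with dominated convergence), I would verify the absolute summability
\begin{align*}
\sum_{n=0}^{\infty}\frac{|c(u)|^n\,\mathbb{E}\big((D_f(t))^{n\alpha}\big)}{\Gamma(n\alpha+1)}<\infty,
\end{align*}
controlling the moment growth $\mathbb{E}\big((D_f(t))^{n\alpha}\big)$ against the factorial decay furnished by $1/\Gamma(n\alpha+1)$. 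The concavity bound $\mathbb{E}\big((D_f(t))^{n\alpha}\big)\le(\mathbb{E}((D_f(t))^{n}))^{\alpha}$, valid for $0<\alpha\le1$, can be used to reduce this to a statement about the integer moments of the subordinator.

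Once this summability is established the interchange is rigorous and \eqref{pgffsp} follows, with the case $\alpha=1$ recovered from the reduction $E_{1,1}(z)=e^{z}$. I expect the conditioning and the series manipulation to be entirely routine, so essentially all of the genuine work lies in the convergence estimate controlling the subordinator moments; if one is content with a formal power-series identity, that estimate can be suppressed, but I would prefer to record it explicitly given that the finiteness of all moments of $D_f(t)$ is precisely the hypothesis built into the definition of the TCGFSP-I.
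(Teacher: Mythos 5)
Your core derivation coincides with the paper's proof: condition on $D_f(t)$ via independence, insert the Mittag-Leffler form \eqref{pgfgfsp} of the GFSP pgf, expand $E_{\alpha,1}$ through its series \eqref{mitag}, and interchange the expectation with the sum. The paper's proof consists of exactly these three steps and treats the interchange as purely formal, so at that level your proposal is a faithful reconstruction.

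The one place where you go beyond the paper, however, is also the one step that would fail. The absolute summability $\sum_{n\ge0}|c(u)|^{n}\,\mathbb{E}\big((D_f(t))^{n\alpha}\big)/\Gamma(n\alpha+1)<\infty$ cannot be deduced from the standing hypothesis that all polynomial moments of $D_f(t)$ are finite: the series is precisely $\mathbb{E}\big(E_{\alpha,1}(|c(u)|D_f^{\alpha}(t))\big)$, and since $E_{\alpha,1}(z)$ grows like $\exp(z^{1/\alpha})$ as $z\to\infty$, you are in effect demanding a finite exponential moment $\mathbb{E}\big(\exp(|c(u)|^{1/\alpha}D_f(t))\big)$, which polynomial moments do not provide. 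The gamma subordinator, the paper's own special case $f_1$ in Section 3.1, is a concrete counterexample: there $\mathbb{E}\big(Z^{n\alpha}(t)\big)/\Gamma(n\alpha+1)=\Gamma(bt+n\alpha)/\big(a^{n\alpha}\Gamma(bt)\Gamma(n\alpha+1)\big)\sim (n\alpha)^{bt-1}a^{-n\alpha}$, so the series diverges whenever $|c(u)|>a^{\alpha}$, and since $c(u)\to\infty$ as $u\downarrow0$ (because of the $\mu_j u^{-j}$ terms), this occurs for admissible $u$ with $|u|\le1$. Your Jensen bound $\mathbb{E}\big((D_f(t))^{n\alpha}\big)\le\big(\mathbb{E}((D_f(t))^{n})\big)^{\alpha}$ is correct but does not repair this, since $(n!)^{\alpha}/\Gamma(n\alpha+1)$ itself grows geometrically in $n$. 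What can be said rigorously is more modest: for $u$ with $c(u)\ge0$ every term is non-negative and the interchange is Tonelli, with both sides of \eqref{pgffsp} then possibly equal to $+\infty$; for $c(u)<0$ one needs either a genuine exponential-moment hypothesis on $D_f(t)$ or a separate argument. Since the paper never addresses any of this, your proof is on par with the paper's own; just do not claim that the convergence estimate follows from the finiteness of all moments, because it does not.
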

\begin{proof}
	From (\ref{qws11ww1}), we have
	\begin{align*}
		G^\alpha_{f}(u,t)&=\int_{0}^{\infty}G_{\mathcal{S}^\alpha}(u,x)h_{f}(x,t)\mathrm{d}x\\
		&=\int_{0}^{\infty}E_{\alpha,1}\Big(\sum_{j=1}^{k}\left(\lambda_{j}(u^{j}-1)+\mu_{j}(u^{-j}-1)\right)x^\alpha\Big)h_{f}(x,t)\mathrm{d}x, \ \ \text{(using \eqref{pgfgfsp})}\\
		&=\sum_{n=0}^{\infty}\frac{\big(\sum_{j=1}^{k}\left(\lambda_{j}(u^{j}-1)+\mu_{j}(u^{-j}-1)\right)\big)^n}{\Gamma(n\alpha +1 )}\int_{0}^{\infty}x^{n\alpha }h_{f}(x,t)\mathrm{d}x,
	\end{align*}
	which gives the required result.
\end{proof}
\begin{remark}\label{pgftc}
On substituting $\alpha=1$ in \eqref{pgffsp}, we get the pgf $G_{f}(u,t)=\mathbb{E}(u^{\mathcal{Z}_f(t)})$ of TCGSP-I in the following form:\begin{align*}
		G_{f}(u,t) &=\sum_{n=0}^{\infty}\Big(-\sum_{j=1}^{k}\left(\lambda_{j}(1-u^{j})+\mu_{j}(1-u^{-j})\right)\Big)^n \mathbb{E}\big((D_f(t))^{ n}\big)/n!\\
		&=\mathbb{E}\Big(e^{-\sum_{j=1}^{k}\big(\lambda_{j}(1-u^{j})+\mu_{j}(1-u^{-j})\big)D_f(t)}\Big) \\
		&=\exp\bigg(-tf\Big(\sum_{j=1}^{k}(\lambda_{j}(1-u^{j})+\mu_{j}(1-u^{-j}))\Big)\bigg),
	\end{align*}
where in the last step we have used \eqref{lexp}.

 Thus, the pgf of TCGSP-I satisfies the following differential equation:
\begin{equation*}
	\frac{\mathrm{d}}{\mathrm{d}t}G_{f}(u,t)=-f\Big(\sum_{j=1}^{k}(\lambda_{j}(1-u^{j})+\mu_{j}(1-u^{-j}))\Big)G_{f}(u,t), \ \ G_{f}(u,0)=1.
\end{equation*}

\end{remark}
Next, we derive the factorial moments of TCGFSP-I by using its pgf. 
\begin{proposition}\label{pp3.1}
	The $r$th factorial moment $\Psi_f^\alpha(r, t) = \mathbb{E}\big(\mathcal{Z}^\alpha_f(t) (\mathcal{Z}^\alpha_f(t) - 1) \cdots (\mathcal{Z}^\alpha_f(t) - r + 1)\big)$, $r \geq 1$ of TCGFSP-I is given by
	\begin{equation*}
		\Psi_f^\alpha(r, t) =r! \sum_{n=1}^{r}\frac{\mathbb{E}\big((D_f(t))^{n\alpha}\big)}{\Gamma( n \alpha +1 )}\underset{m_i\in\mathbb{N}}{\underset{\sum _{i=1}^n m_{i}=r}{\sum}}\prod_{l=1}^n\Big(\frac{1}{m_l!}\Big(\sum_{j=1}^k \big((j)_{m_l} \lambda_j + (-1)^{m_l} j^{(m_l)} \mu_j\big)\Big)\Big),
	\end{equation*}
	where $j^{(m_l)} = j(j+1) \cdots (j+m_l-1)$ denotes the rising factorial and $(J)_{m_l} = j(j-1) \cdots (j-m_l+1)$ denotes the falling factorial.
\end{proposition}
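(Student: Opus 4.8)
The plan is to read off the factorial moments directly from the probability generating function of Theorem \ref{thpgfI} via the standard identity $\Psi_f^\alpha(r,t)=\frac{\partial^r}{\partial u^r}G_f^\alpha(u,t)\big|_{u=1}$. To keep the bookkeeping clean I would abbreviate $g(u)=\sum_{j=1}^k\big(\lambda_j(u^j-1)+\mu_j(u^{-j}-1)\big)$, so that \eqref{pgffsp} becomes $G_f^\alpha(u,t)=\sum_{n=0}^\infty\frac{(g(u))^n}{\Gamma(n\alpha+1)}\mathbb{E}\big((D_f(t))^{n\alpha}\big)$. The finite-moment hypothesis on $D_f$ guarantees that the factorial moments of $\mathcal{Z}_f^\alpha(t)$ exist, so this identity is the right tool.

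The key structural observation is that $g(1)=0$. Differentiating the series termwise $r$ times and applying the generalized Leibniz rule to $(g(u))^n=g(u)\cdots g(u)$ gives $\frac{d^r}{du^r}(g(u))^n=\sum_{m_1+\cdots+m_n=r}\binom{r}{m_1,\ldots,m_n}\prod_{l=1}^n g^{(m_l)}(u)$. Upon setting $u=1$, every composition containing a factor $m_l=0$ contributes $g^{(0)}(1)=g(1)=0$ and hence drops out, leaving only those with all $m_l\ge1$; since $n$ such parts summing to $r$ forces $n\le r$, the outer summation collapses to $1\le n\le r$, and the $n=0$ term vanishes for $r\ge1$. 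This explains both the range of the outer sum and the constraint $m_i\in\mathbb{N}$ in the statement.

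Next I would evaluate the elementary derivatives $g^{(m)}(1)$ for $m\ge1$. Differentiating $u^j$ gives $(j)_m u^{j-m}$ and differentiating $u^{-j}$ gives $(-1)^m j^{(m)}u^{-j-m}$, so at $u=1$ one obtains $g^{(m)}(1)=\sum_{j=1}^k\big((j)_m\lambda_j+(-1)^m j^{(m)}\mu_j\big)$, precisely the inner sum appearing in the proposition. Substituting this and writing the multinomial coefficient as $\binom{r}{m_1,\ldots,m_n}=r!/(m_1!\cdots m_n!)$, then factoring out $r!$, reproduces the claimed formula.

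The one point genuinely requiring care, rather than the Leibniz-rule algebra, is justifying the term-by-term differentiation of the infinite series at the boundary point $u=1$. I would handle this by noting that $g$ is analytic near $u=1$ and that the assumption $\mathbb{E}(D_f^{\rho}(t))<\infty$ for every $\rho>0$ makes the formally differentiated series absolutely and uniformly convergent in a neighborhood of $u=1$ (equivalently, one takes left derivatives as $u\uparrow1$ and invokes the existence of the factorial moments to legitimize the interchange of derivative and summation). I expect this to be the only step needing justification; everything else is a direct computation.
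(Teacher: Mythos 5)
Your proposal is correct and follows essentially the same route as the paper: differentiate the pgf of Theorem \ref{thpgfI} $r$ times at $u=1$, expand $\frac{\mathrm{d}^r}{\mathrm{d}u^r}(\mathcal{G}(u))^n$ by the multinomial (generalized Leibniz) formula — which the paper cites from Johnson (2002) — use $\mathcal{G}(1)=0$ to kill all compositions containing a zero part and thereby truncate the outer sum to $1\le n\le r$, and evaluate $\mathcal{G}^{(m)}(1)$ in terms of falling and rising factorials. The only difference is that you explicitly flag the justification of term-by-term differentiation of the series at $u=1$, a point the paper passes over silently; this is a welcome refinement rather than a different argument.
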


\begin{proof}
	Let $\mathcal{G}(u) =\sum_{j=1}^k \big(\lambda_{j}(u^{j}-1)+\mu_{j}(u^{-j}-1)\big)$. From \eqref{pgffsp}, we have
	
	\begin{align}\label{eq0}
		\Psi_f^\alpha(r,t) &=\frac{\partial^r }{\partial u^r}{G}_f^\alpha (u,t)\Big|_{u=1}\nonumber\\ &=\sum_{n=0}^{\infty}\frac{\mathbb{E}\big((D_f(t))^{n\alpha }\big)}{\Gamma(n \alpha +1 )}\frac{\mathrm{d}^r}{\mathrm{d} u^r} (\mathcal{G}(u))^n \Big|_{u=1}.
	\end{align}
	Now, by using the following result (see Johnson (2002), Eq. (3.6)):
	\begin{equation*}
		\frac{\mathrm{d}^{r}}{\mathrm{d}u^{^{r}}}(f(u))^{n}=\underset{m_j\in\mathbb{N}_0}{\underset{m_{1}+m_{2}+\dots+m_{n}=r}{\sum}}\frac{r!}{m_1!m_2!\ldots m_n!}f^{(m_{1})}(u)f^{(m_{2})}(u)\dots f^{(m_{n})}(u),
	\end{equation*}
we can write
	\begin{align}\label{eq1}
		\frac{\mathrm{d}^r}{\mathrm{d} u^r} (\mathcal{G}(u))^n \Big|_{u=1} &= r!\underset{m_i\in\mathbb{N}_0}{\underset{\sum _{i=1}^n m_{i}=r}{\sum}} \prod_{l=1}^n \frac{1}{m_l!}  \frac{\mathrm{d}^{m_l}}{\mathrm{d}u^{m_l}} \mathcal{G}(u) \Big|_{u=1} \nonumber\\ &= r!\underset{m_i\in\mathbb{N}}{\underset{\sum _{i=1}^n m_{i}=r}{\sum}} \prod_{l=1}^n \frac{1}{m_l!} \Big(\sum_{j=1}^k \big((j)_{m_l} \lambda_j + (-1)^{m_l} j^{(m_l)} \mu_j\big)\Big).
	\end{align}
Here, $\mathbb{N}_0$ is the set of non-negative integers.  From \eqref{eq0} and \eqref{eq1}, we get
	\begin{align}\label{eq2}
		\Psi^\alpha_f(r,t)&=r! \sum_{n=0}^{\infty}\frac{\mathbb{E}\big((D_f(t))^{n \alpha }\big)}{\Gamma(n \alpha +1 )}\underset{m_i\in\mathbb{N}}{\underset{\sum _{i=1}^n m_{i}=r}{\sum}} \prod_{l=1}^n \frac{1}{m_l!}\Big(\sum_{j=1}^k \big((j)_{m_l} \lambda_j + (-1)^{m_l} j^{(m_l)} \mu_j\big)\Big).
	\end{align}
	Note that the right hand side of \eqref{eq2} vanishes for $ n=0$ and $n \ge r+1$. This completes the proof.
	
\end{proof}

\begin{remark}  The mgf  $ \mathcal{M}_{ \mathcal{S}^{\alpha}}(u,t)=\mathbb{E}(e^{u \mathcal{S}^{\alpha}(t)})$ of GFSP can be evaluated as follows:
	
	Let $h_{\alpha}(x,t)$ be the pdf of $\{Y_\alpha(t)\}_{t\ge 0}$. From \eqref{def}, we have
	\begin{align}\label{mgfgfsp}
		\mathcal{M}_{ \mathcal{S}^{\alpha}}(u,t)&=\int_{0}^{\infty}\mathbb{E}(e^{u \mathcal{S}(x)})h_{\alpha}(x,t)\mathrm{d}x \nonumber\\
		&=\int_{0}^{\infty}\exp{\Big(\sum_{j=1}^{k}\big(\lambda_{j}(e^{uj}-1)+\mu_{j}(e^{-uj}-1)\big)x \Big)}h_{\alpha}(x,t)\mathrm{d}x, \ \ (\text{using \eqref{mgfgsp}})\nonumber\\&=E_{\alpha,1}\Big(\sum_{j=1}^{k}\big(\lambda_{j}(e^{uj}-1)+\mu_{j}(e^{-uj}-1)\big)t^\alpha \Big),\ \ u \in \mathbb{R},
	\end{align} 
	where in the last step we have used \eqref{lpinv}.
	
Since the Mittag-Leffler function is an eigenfunction of the Caputo fractional derivative, it follows that
	\begin{equation*}
		\frac{\mathrm{d}^\alpha}{\mathrm{d} t^\alpha} \mathcal{M}_{ \mathcal{S}^{\alpha}}(u,t) =\Big(\sum_{j=1}^{k}\big(\lambda_{j}(e^{uj}-1)+\mu_{j}(e^{-uj}-1)\big)\Big) \mathcal{M}_{ \mathcal{S}^{\alpha}}(u,t), \ \ \mathcal{M}_{ \mathcal{S}^{\alpha}}(u,0)=1,
	\end{equation*}
where	$\dfrac{\mathrm{d}^\alpha}{\mathrm{d} t^\alpha}$ is the Caputo fractional derivative (see Kilbas {\it et al.} (2006)).

On substituting $k=1$ in \eqref{mgfgfsp}, we get the mgf of fractional Skellam process of type II (see Kerss {\it et al.} (2014), Eq. (3.6)).
	
By using the mgf of GFSP, we can obtain its $r$th order moment and it is given by
	\begin{equation*}
		\mathbb{E}\big((\mathcal{S}^\alpha(t))^r \big) =r! \sum_{n=1}^{r}\frac{t^{n \alpha }}{\Gamma(n \alpha +1)}\underset{m_i\in\mathbb{N}}{\underset{\sum _{i=1}^n m_{i}=r}{\sum}} \prod_{l=1}^n\Big( \frac{1}{m_l!} \Big(\sum_{j=1}^k\big(j^{m_l} \lambda_j + (-1)^{m_l} j^{m_l} \mu_j \big)\Big)\Big),\ \ r \geq 1.
	\end{equation*}
	Its proof follows along the similar lines to that of Proposition 3 of Kataria and Khandakar (2022b).
\end{remark}   
Next, we obtain the mgf $\mathcal{M}^\alpha_{f}(u,t)=\mathbb{E}(e^{u\mathcal{Z}_f^\alpha(t)})$ of TCGFSP-I using \eqref{mgfgfsp} as follows:

 From \eqref{qws11ww1}, we have
\begin{align}\label{mgffsp}
	\mathcal{M}^\alpha_{f}(u,t) &=\int_{0}^{\infty}\mathcal{M}_{ \mathcal{S}^{\alpha}}(u,x)h_{f}(x,t)\mathrm{d}x \nonumber\\ &=\int_{0}^{\infty}E_{\alpha,1}\Big(\sum_{j=1}^{k}\big(\lambda_{j}(e^{uj}-1)+\mu_{j}(e^{-uj}-1)\big)x^\alpha \Big)h_{f}(x,t)\mathrm{d}x\nonumber\\
	&=\sum_{n=0}^{\infty}\frac{\big(\sum_{j=1}^{k}(\lambda_{j}(e^{uj}-1)+\mu_{j}(e^{-uj}-1))\big)^n}{\Gamma(n \alpha +1 )}\mathbb{E}\big((D_f(t))^{n \alpha }\big),\ \ u\in \mathbb{R}.
\end{align} 
Thus, by using \eqref{mgffsp}, we can obtain the $r$th order moment of TCGFSP-I and it is given by
\begin{equation*}
	\mathbb{E}\big((\mathcal{Z}^\alpha_f(t))^r \big) =r! \sum_{n=1}^{r}\frac{\mathbb{E}\big((D_f(t))^{n \alpha }\big)}{\Gamma(n \alpha +1)}\underset{m_i\in\mathbb{N}}{\underset{\sum _{i=1}^n m_{i}=r}{\sum}} \prod_{l=1}^n \Big(\frac{1}{m_l!}\Big(\sum_{j=1}^k \big(j^{m_l} \lambda_j + (-1)^{m_l} j^{m_l} \mu_j \big)\Big)\Big),\ \ \text{$r \geq 1$}.
\end{equation*}
Its proof follows along the similar lines to that of Proposition \ref{pp3.1}.

On substituting $\alpha=1$ in \eqref{mgffsp}, we get the mgf $\mathcal{M}_{f}(u,t)=\mathbb{E}(e^{u\mathcal{Z}_f(t)})$ of TCGSP-I in the following form: 
\begin{equation*}
	\mathcal{M}_{f}(u,t)=\exp\Big(-tf\Big(\sum_{j=1}^{k}\big(\lambda_{j}(1-e^{uj})+\mu_{j}(1-e^{-uj})\big)\Big)\Big),\ \ u \in \mathbb{R}.
\end{equation*}  
Its proof follows from a calculation analogous to that in Remark \ref{pgftc}. Thus, the mgf of TCGSP-I satisfies
\begin{equation*}
	\frac{\mathrm{d}}{\mathrm{d}t}\mathcal{M}_{f}(u,t)=-f\Big(\sum_{j=1}^{k}\big(\lambda_{j}(1-e^{uj})+\mu_{j}(1-e^{-uj})\big)\Big)\mathcal{M}_{f}(u,t),\ \ \mathcal{M}_{f}(u,0)=1.
\end{equation*}
\begin{remark}
On substituting $\lambda_j=\lambda$ and $\mu_j=\mu$, $j=1,2\dots, k$ in \eqref{mgffsp}, we get the mgf of a time-changed Skellam process of order $k$ (see Gupta {\it et al.} (2020a),  Section (6)).
\end{remark}

Next, we discuss the mean, variance and covariance of TCGFSP-I.

Let $0 < s \leq t < \infty$, and assume that 
$$l_1 =m_1/\Gamma(1 + \alpha),\  l_2 =m_2/\Gamma(1 + \alpha),\  d=\alpha l_1^2 B(\alpha, 1 + \alpha),$$ where $m_1$ and $m_2 $ are given in \eqref{r1r2}.

\noindent Using \eqref{meani} and \eqref{mgfsp}, the mean of TCGFSP-I is obtained as follows:
\begin{equation}\label{meantcgfsp}
	\mathbb{E}\big( \mathcal{Z}_f^{\alpha}(t) \big) = \mathbb{E}\big( \mathbb{E}( \mathcal{S}^{\alpha}(D_f(t)) | D_f(t)) \big) = l_1 \mathbb{E}\big( D_f^{\alpha}(t)\big).
\end{equation}
From \eqref{meani}, \eqref{covin} and \eqref{cgfsp}, we get
\begin{align*}
	\mathbb{E}\big( \mathcal{S}^{\alpha}(s) \mathcal{S}^{\alpha}(t) \big) = l_2 s^{\alpha} + ds^{2\alpha} + l_1^2 \alpha t^{2\alpha} B\big(\alpha,  \alpha +1; s/t \big).
\end{align*}
Thus,
\begin{align*}
	\mathbb{E}\big(\mathcal{Z}_f^{\alpha}(s) \mathcal{Z}_f^{\alpha}(t) \big) =& \mathbb{E}\big( \mathbb{E}\big( \mathcal{S}^{\alpha}(D_f(s))  \mathcal{S}^{\alpha}(D_f(t)) \big| D_f(s), D_f(t) \big) \big)\\
	=& l_2 \mathbb{E}\big( D_f^{\alpha}(s) \big) + d \mathbb{E}\big( D_f^{2\alpha}(s) \big) + l_1^2 \alpha\mathbb{E}\big( D_f^{2\alpha}(t) B\big(\alpha, \alpha +1 ; {D_f(s)}/{D_f(t)} \big) \big).
\end{align*}
Therefore, the covariance of TCGFSP-I is obtained in the following form:
\begin{align}\label{covtcgfsp}
	\text{Cov}\big(\mathcal{Z}_f^{\alpha}(s), \mathcal{Z}_f^{\alpha}(t) \big)& = l_2\mathbb{E}\big( D_f^{\alpha}(s) \big) + d \mathbb{E}\big( D_f^{2\alpha}(s) \big) - l_1^2 \mathbb{E}\big( D_f^{\alpha}(s) \big) \mathbb{E}\big( D_f^{\alpha}(t) \big) \nonumber \\& \hspace{2.0cm} +  l_1^2 \alpha \mathbb{E}\big( D_f^{2\alpha}(t) B\big(\alpha, \alpha +1; {D_f(s)}/{D_f(t)} \big) \big).
\end{align}
On substituting $s = t$ in \eqref{covtcgfsp}, we get the variance of TCGFSP-I and it is given by \begin{equation}\label{vartcgfsp}
	\text{Var}\big( \mathcal{Z}_f^{\alpha}(t) \big) =  \mathbb{E}\big( D_f^{\alpha}(t) \big) \big( l_2 - l_1^2\mathbb{E}\big( D_f^{\alpha}(t) \big) \big) + 2d \mathbb{E}\big( D_f^{2\alpha}(t) \big).
\end{equation}

\begin{remark} It is known that a stochastic process $\{X(t)\}_{t > 0}$  exhibits overdispersion property if $\text{Var}(X(t))-\mathbb{E}(X(t)) > 0$ for all $t>0$. Kataria and Khandakar (2022b) showed that the GSPP and GFSP are overdispersed processes. From \eqref{meantcgfsp} and \eqref{vartcgfsp}, we can write
	\begin{align*}
		\mathrm{Var}\big(\mathcal{Z}^{\alpha}_{f}(t)\big) - \mathbb{E}\big(\mathcal{Z}^{\alpha}_{f}(t)\big)&=\frac{m_1^2}{\alpha}\Big(\frac{\mathbb{E}\big(D^{2\alpha}_{f}(t)\big)}{\Gamma(2\alpha)}-\frac{\big( \mathbb{E}\big(D^{\alpha}_{f}(t)\big)\big)^2}{\alpha \Gamma ^2(\alpha)}\Big)+\mathbb{E}\big(D^{\alpha}_{f}(t)\big)(l_2-l_1)\\
		& \ge {m_1^2 \big( \mathbb{E}\big(D^{\alpha}_{f}(t)\big)\big)^2}\frac{1}{\alpha}\Big(\frac{1}{\Gamma(2\alpha)}-\frac{1}{\alpha \Gamma ^2(\alpha)}\Big)+\mathbb{E}\big(D^{\alpha}_{f}(t)\big)(l_2-l_1),
	\end{align*}
	where in the last step we have used $\mathbb{E}\big(D^{2\alpha}_{f}(t)\big) \ge \big( \mathbb{E}\big(D^{\alpha}_{f}(t)\big)\big)^2$. As $\frac{1}{\alpha}\big(\frac{1}{\Gamma(2\alpha)}-\frac{1}{\alpha \Gamma ^2(\alpha)}\big) > 0$ for all $\alpha \in (0,1)$  (see Beghin and Macci (2014), Section 3.1), the TCGFSP-I exhibits overdispersion.
\end{remark}

Next, we show that under certain asymptotic conditions on the L\'evy subordinator, the TCGFSP-I exhibits the LRD property.
\begin{theorem}\label{lrd}
Let $\{D_f(t)\}_{t\ge0}$ be a L\'evy subordinator such that 
	\begin{equation}\label{exp  Levyasym}
		\mathbb{E}(D_f^{i\alpha}(t)) \sim k_i t^{i\rho}, \ \ \text{for}\  i=1, 2, 
	\end{equation}
	for some $0 < \rho < 1$, $k_1>0$ and $k_2 \geq k_1^2$. Then, the TCGFSP-I exhibits the LRD property.
\end{theorem}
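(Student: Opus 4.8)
The plan is to study the correlation
$\operatorname{Corr}\big(\mathcal{Z}_f^{\alpha}(s),\mathcal{Z}_f^{\alpha}(t)\big)=\operatorname{Cov}\big(\mathcal{Z}_f^{\alpha}(s),\mathcal{Z}_f^{\alpha}(t)\big)\big/\sqrt{\operatorname{Var}(\mathcal{Z}_f^{\alpha}(s))\operatorname{Var}(\mathcal{Z}_f^{\alpha}(t))}$
for a fixed $s>0$ as $t\to\infty$, and to show that it is asymptotically a positive constant times $t^{-\gamma}$ with $\gamma\in(0,1)$. Since $\operatorname{Var}(\mathcal{Z}_f^{\alpha}(s))$ is a fixed positive number, everything reduces to extracting the leading-order asymptotics of $\operatorname{Var}(\mathcal{Z}_f^{\alpha}(t))$ from \eqref{vartcgfsp} and of $\operatorname{Cov}(\mathcal{Z}_f^{\alpha}(s),\mathcal{Z}_f^{\alpha}(t))$ from \eqref{covtcgfsp}, in each case feeding in the moment asymptotics \eqref{exp  Levyasym}.

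First I would treat the variance. Inserting \eqref{exp  Levyasym} into \eqref{vartcgfsp}, the three summands behave like $l_2k_1t^{\rho}$, $-l_1^2k_1^2t^{2\rho}$ and $2dk_2t^{2\rho}$, so that $\operatorname{Var}(\mathcal{Z}_f^{\alpha}(t))\sim(2dk_2-l_1^2k_1^2)t^{2\rho}$. The key point is that the leading coefficient is strictly positive: recalling $d=\alpha l_1^2 B(\alpha,1+\alpha)=l_1^2\Gamma^2(\alpha+1)/\Gamma(2\alpha+1)$ and using $k_2\ge k_1^2$ one gets $2dk_2-l_1^2k_1^2\ge l_1^2k_1^2\big(2\Gamma^2(\alpha+1)/\Gamma(2\alpha+1)-1\big)$, which is positive since $2\Gamma^2(\alpha+1)>\Gamma(2\alpha+1)$ for $\alpha\in(0,1)$ — the same gamma-function inequality (Beghin and Macci (2014)) that underlies the overdispersion remark above. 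Hence $\sqrt{\operatorname{Var}(\mathcal{Z}_f^{\alpha}(t))}\sim\sqrt{2dk_2-l_1^2k_1^2}\,t^{\rho}$.

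Next I would analyze \eqref{covtcgfsp} for fixed $s$. Its first two summands $l_2\mathbb{E}(D_f^{\alpha}(s))+d\mathbb{E}(D_f^{2\alpha}(s))$ are constants in $t$, so the real issue is the interaction of $-l_1^2\mathbb{E}(D_f^{\alpha}(s))\mathbb{E}(D_f^{\alpha}(t))$ with $l_1^2\alpha\,\mathbb{E}\big(D_f^{2\alpha}(t)B(\alpha,\alpha+1;D_f(s)/D_f(t))\big)$. Applying \eqref{  Levyasymbeta} to the latter gives $\alpha\,\mathbb{E}\big(D_f^{2\alpha}(t)B(\alpha,\alpha+1;D_f(s)/D_f(t))\big)\sim\mathbb{E}(D_f^{\alpha}(s))\mathbb{E}(D_f^{\alpha}(t-s))$, and since $\mathbb{E}(D_f^{\alpha}(t-s))\sim k_1(t-s)^{\rho}\sim k_1t^{\rho}\sim\mathbb{E}(D_f^{\alpha}(t))$, the two $t^{\rho}$-growing contributions cancel to leading order. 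Consequently $\operatorname{Cov}(\mathcal{Z}_f^{\alpha}(s),\mathcal{Z}_f^{\alpha}(t))\to l_2\mathbb{E}(D_f^{\alpha}(s))+d\mathbb{E}(D_f^{2\alpha}(s))=:c_0(s)>0$, a finite positive constant.

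Combining the two estimates yields $\operatorname{Corr}(\mathcal{Z}_f^{\alpha}(s),\mathcal{Z}_f^{\alpha}(t))\sim\frac{c_0(s)}{\sqrt{\operatorname{Var}(\mathcal{Z}_f^{\alpha}(s))}\sqrt{2dk_2-l_1^2k_1^2}}\,t^{-\rho}$, which is of the form $c(s)t^{-\gamma}$ with $c(s)>0$ and $\gamma=\rho\in(0,1)$; by the definition in Section \ref{section2} this is precisely the LRD property. I expect the covariance step to be the main obstacle: the cancellation of the two $t^{\rho}$ terms rests on replacing $\mathbb{E}(D_f^{\alpha}(t-s))$ by $\mathbb{E}(D_f^{\alpha}(t))$, i.e.\ on controlling the difference of two quantities that both diverge like $k_1t^{\rho}$, so one must check that the surviving remainder is $o(t^{\rho})$ and does not disturb the constant limit rather than merely assert termwise equivalence; the relation \eqref{  Levyasymbeta} is the essential tool that makes this rigorous. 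A separate degenerate case is $m_1=0$, where $l_1=d=0$: then $\operatorname{Var}(\mathcal{Z}_f^{\alpha}(t))\sim l_2k_1t^{\rho}$ while the covariance stays constant, giving $\gamma=\rho/2\in(0,1)$, so the LRD property persists.
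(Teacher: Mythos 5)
Your proposal is correct and follows essentially the same route as the paper's proof: the paper likewise feeds \eqref{  Levyasymbeta} and \eqref{exp  Levyasym} into \eqref{covtcgfsp} and \eqref{vartcgfsp}, obtaining $\operatorname{Cov}\big(\mathcal{Z}^{\alpha}_f(s),\mathcal{Z}^{\alpha}_f(t)\big)\sim l_2\mathbb{E}(D_f^{\alpha}(s))+d\mathbb{E}(D_f^{2\alpha}(s))-l_1^2k_1\mathbb{E}(D_f^{\alpha}(s))\rho s t^{\rho-1}$ (your constant limit, with the vanishing correction kept explicit) and $\operatorname{Var}\big(\mathcal{Z}^{\alpha}_f(t)\big)\sim d_1t^{2\rho}$ with $d_1>0$ via exactly the same gamma-function inequality and $k_2\ge k_1^2$, and then concludes $\operatorname{Corr}\sim c(s)t^{-\rho}$ with $\rho\in(0,1)$. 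Your additional treatment of the degenerate case $m_1=0$ (where the exponent becomes $\rho/2$) is a point the paper's proof silently assumes away, since its $d_1$ vanishes there, but it does not alter the method.
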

\begin{proof}
	 Let $0 < s < t$. By using \eqref{  Levyasymbeta} in \eqref{covtcgfsp} for fixed $s$ and large $t$, we get
\begin{align}\label{cov prs asym}
	\text{Cov}\big(\mathcal{Z}^{\alpha}_f(s), \mathcal{Z}^{\alpha}_f(t)\big)&\sim l_2 \mathbb{E}(D_f^\alpha(s)) + d\mathbb{E}(D_f^{2\alpha}(s)) - l_1^2\mathbb{E}(D_f^\alpha(s)) \mathbb{E}(D_f^\alpha(t)) \nonumber\\
	 & \ \ +l_1^2\mathbb{E}(D_f^\alpha(s)) \mathbb{E}(D_f^\alpha(t-s)) \nonumber \\& \sim l_2 \mathbb{E}(D_f^\alpha(s)) + d\mathbb{E}(D_f^{2\alpha}(s)) - l_1^2k_1\mathbb{E}(D_f^\alpha(s))(t^\rho -(t-s)^\rho)\nonumber\\
	& \sim l_2 \mathbb{E}(D_f^\alpha(s)) + d\mathbb{E}(D_f^{2\alpha}(s)) - l_1^2k_1\mathbb{E}(D_f^\alpha(s))\rho s t^{\rho-1},
\end{align}
where we have used $\eqref{exp  Levyasym}$ in the penultimate step. Now, we use \eqref{exp  Levyasym} in \eqref{vartcgfsp} to obtain
\begin{align*}
	\text{Var}\big(\mathcal{Z}^{\alpha}_f(t)\big)&\sim l_2k_1 t^\rho + (2dk_2 - l_1^2k_1^2)t^{2\rho}\\ &\sim (2dk_2 - l_1^2k_1^2)t^{2\rho}\\&=\frac{m_1^2}{\alpha}\Big(\frac{k_2}{\Gamma(2\alpha)}-\frac{k_1^2}{\alpha \Gamma ^2(\alpha)}\Big)t^{2\rho}= d_1t^{2\rho},
\end{align*}
where $d_1=\dfrac{m_1^2}{\alpha}\Big(\dfrac{k_2}{\Gamma(2\alpha)}-\dfrac{k_1^2}{\alpha \Gamma ^2(\alpha)}\Big) >0$ as $k_2 \geq k_1^2$.
Therefore, for large $t$, we have
\begin{align*}
	\text{Corr}\big(\mathcal{Z}^{\alpha}_f(s), \mathcal{Z}^{\alpha}_f(t)\big) & \sim \frac{l_2 \mathbb{E}(D_f^\alpha(s)) + d\mathbb{E}(D_f^{2\alpha}(s)) - l_1^2k_1\mathbb{E}(D_f^\alpha(s))\rho s t^{\rho-1}}{\sqrt{{ d_1t^{2\rho}\text{Var}}\big(\mathcal{Z}^{\alpha}_f(s)\big)}}\\ 
	&\sim \bigg(\frac{l_2 \mathbb{E}(D_f^\alpha(s)) + d\mathbb{E}(D_f^{2\alpha}(s))}{\sqrt{d_1{\text{Var}}\big(\mathcal{Z}^{\alpha}_f(s)\big) }}\bigg)t^{-\rho}.
\end{align*}
As $0 < \rho < 1$, the proof follows.
\end{proof}
\begin{remark}
	Similarly, it can be shown that the TCGSP-I has the LRD property.
\end{remark}

In the next result, we obtain a version of the law of iterated logarithm for TCGFSP-I. 
\begin{theorem}
	Let $\{D_{f}(t)\}_{t\ge0}$ be a L\'evy subordinator whose  associated Bern\v stein function $f$ is such that $\lim_{x\rightarrow 0+}f(\lambda x)/f(x)=\lambda^\theta$, $\lambda>0$, that is, $f$   is regularly varying at $0+$ with index $0<\theta<1$. Also, let \begin{equation*}
		g(t)=\frac{\log\log t}{\phi(t^{-1}\log\log t)},\ \ t>e,
	\end{equation*}
	where $\phi$ is the inverse of $f$. Then,\begin{equation}\label{lil}
		\liminf_{t\rightarrow\infty}\frac{\mathcal{Z}_{f}^{\alpha}(t)}{(g(t))^{\alpha}}\stackrel{d}{=}\sum_{j=1}^{k}j(\lambda_{j}-\mu_{j}) Y_{\alpha}(1)\theta^\alpha\Big(1-\theta\Big)^{\alpha(1-\theta)/\theta}.
	\end{equation}	
\end{theorem}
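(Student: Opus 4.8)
The plan is to factor the process as $\mathcal{Z}_f^\alpha(t)=\mathcal{S}\big(Y_\alpha(D_f(t))\big)$ as in \eqref{qws11ww1} and peel off its three layers one at a time: the outer generalized Skellam process $\mathcal{S}$, the subordinator $D_f$, and the inverse stable time change $Y_\alpha$. First I would replace the outer layer by its drift. Since $\{\mathcal{S}(t)\}_{t\ge0}$ is a L\'evy process with finite mean $\mathbb{E}(\mathcal{S}(1))=m_1=\sum_{j=1}^k j(\lambda_j-\mu_j)$ (see \eqref{r1r2}), the strong law of large numbers for L\'evy processes upgrades the convergence in probability in \eqref{limitS} to $\mathcal{S}(v)/v\to m_1$ almost surely as $v\to\infty$. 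Because $D_f(t)\to\infty$ and hence $Y_\alpha(D_f(t))\to\infty$ almost surely, this gives $\mathcal{Z}_f^\alpha(t)\sim m_1\,Y_\alpha(D_f(t))$ almost surely, so that (in the generic case $m_1>0$)
\[
\liminf_{t\to\infty}\frac{\mathcal{Z}_f^\alpha(t)}{(g(t))^\alpha}=m_1\,\liminf_{t\to\infty}\frac{Y_\alpha(D_f(t))}{(g(t))^\alpha}\quad\text{a.s.},
\]
reducing the theorem to an analysis of $Y_\alpha(D_f(t))$.

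Next I would handle the subordinator. Because $f$ is regularly varying at $0+$ with index $\theta\in(0,1)$, its inverse $\phi$ is regularly varying at $0+$ with index $1/\theta$, and the classical Fristedt-type lower-envelope law of the iterated logarithm for subordinators applies: with $g(t)=\log\log t/\phi(t^{-1}\log\log t)$ one has
\[
\liminf_{t\to\infty}\frac{D_f(t)}{g(t)}=\theta(1-\theta)^{(1-\theta)/\theta}=:C_\theta\quad\text{a.s.}
\]
This is precisely the source of the constant $\theta^\alpha(1-\theta)^{\alpha(1-\theta)/\theta}=C_\theta^\alpha$ on the right-hand side of \eqref{lil}.

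The final step is to push the liminf through the independent, self-similar time change $Y_\alpha$. Writing
\[
\frac{Y_\alpha(D_f(t))}{(g(t))^\alpha}=\Big(\frac{D_f(t)}{g(t)}\Big)^\alpha\frac{Y_\alpha(D_f(t))}{(D_f(t))^\alpha},
\]
I would condition on the path of $D_f$ (legitimate since $Y_\alpha$ is independent of $D_f$), use that along the random times realizing the lower envelope one has $D_f(t)/g(t)\to C_\theta$, and invoke the self-similarity $Y_\alpha(s)\overset{d}{=}s^\alpha Y_\alpha(1)$ from \eqref{selfsimi} to identify the limiting distribution of the second factor as $Y_\alpha(1)$. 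This gives
\[
\liminf_{t\to\infty}\frac{Y_\alpha(D_f(t))}{(g(t))^\alpha}\overset{d}{=}C_\theta^\alpha\,Y_\alpha(1),
\]
and multiplying by $m_1$ from the first step produces exactly \eqref{lil}.

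The hard part is this last transfer step. Since the lower envelope of $D_f$ is attained only along a sparse random subsequence, while the normalization is $(g(t))^\alpha$ rather than $(D_f(t))^\alpha$, one cannot naively interchange the liminf with the composition: the factor $Y_\alpha(s)/s^\alpha$ has almost-sure liminf $0$ over all $s$, so the nondegeneracy of the answer hinges on carefully controlling the fluctuations of $Y_\alpha$ exactly along the envelope-realizing times of $D_f$. Making the distributional identity ``$\overset{d}{=}$'' rigorous — rather than inadvertently concluding a degenerate almost-sure limit — is where the independence of $Y_\alpha$ and $D_f$ together with the self-similarity must be deployed in tandem, and this is the step I would expect to demand the most care.
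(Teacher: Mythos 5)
Your proposal stalls exactly at the step you yourself flag as ``the hard part,'' and that step is never actually carried out --- so what you have is an outline with a genuine gap, not a proof. After peeling off $\mathcal{S}$ by the strong law (that part is fine, and matches the paper's use of \eqref{limitS}), you are left with $\liminf_{t\to\infty} Y_\alpha(D_f(t))/(g(t))^\alpha$, and you correctly observe that the factorization $\big(D_f(t)/g(t)\big)^\alpha\cdot Y_\alpha(D_f(t))/(D_f(t))^\alpha$ cannot be handled naively, since $\liminf_{s\to\infty}Y_\alpha(s)/s^\alpha=0$ a.s. Your stated remedy --- condition on the path of $D_f$, restrict to the envelope-realizing times, and control the fluctuations of $Y_\alpha$ along that sparse random subsequence --- is precisely the analysis that would have to be supplied, and no argument is given for it. It is also far from clear it can be made to work in this form: the envelope-realizing times of $D_f$ are random and unbounded, and along them the ratio $Y_\alpha(D_f(t))/(D_f(t))^\alpha$ has only a distributional limit, not an almost-sure one, so the liminf of the product is not the product of a liminf and a limit, and a genuinely new idea is needed to extract a nondegenerate answer.

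The paper sidesteps this difficulty entirely by reversing the order of operations: it applies the self-similarity \eqref{selfsimi} \emph{before} taking any limit, writing (by conditioning on $D_f(t)$ and using independence)
\begin{equation*}
\mathcal{Z}_f^\alpha(t)=\mathcal{S}\big(Y_\alpha(D_f(t))\big)\stackrel{d}{=}\mathcal{S}\big(D_f^\alpha(t)\,Y_\alpha(1)\big),
\end{equation*}
so that the inverse stable subordinator enters only through the single random variable $Y_\alpha(1)$, which does not vary with $t$. Then
\begin{equation*}
\liminf_{t\to\infty}\frac{\mathcal{S}\big(D_f^\alpha(t)Y_\alpha(1)\big)}{(g(t))^\alpha}
=\Big(\lim_{t\to\infty}\frac{\mathcal{S}\big(D_f^\alpha(t)Y_\alpha(1)\big)}{D_f^\alpha(t)Y_\alpha(1)}\Big)\, Y_\alpha(1)\Big(\liminf_{t\to\infty}\frac{D_f(t)}{g(t)}\Big)^\alpha,
\end{equation*}
where the first factor tends to $m_1$ because $D_f(t)\to\infty$ a.s., and the remaining liminf is the constant $\theta(1-\theta)^{(1-\theta)/\theta}$ by Bertoin's law of the iterated logarithm for subordinators --- the same ingredient you cite. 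This substitution is the one idea your proposal is missing: it converts the intractable composition $Y_\alpha(D_f(\cdot))$ into a product with a $t$-independent random factor, for which the liminf factorizes trivially. (To be fair, the paper's maneuver has its own delicate point: the identity above is an equality of one-dimensional distributions for each fixed $t$, not of processes, so passing it through a liminf is a convention of this literature rather than a fully justified step; but it is precisely what produces the distributional equality \eqref{lil}, whereas your route, as you concede, terminates before reaching it.)
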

\begin{proof}
Using \eqref{selfsimi} in (\ref{qws11ww1}), we get
	\begin{equation}\label{self}
		\mathcal{Z}_{f}^{\alpha}(t)=\mathcal{S}(Y_{\alpha}(D_{f}(t)))\stackrel{d}{=}\mathcal{S} (D^\alpha_f(t) Y_{\alpha}(1)).
	\end{equation}  
Hence,
	\begin{align*}
		\liminf_{t\rightarrow\infty}\frac{\mathcal{Z}_{f}^{\alpha}(t)}{(g(t))^{\alpha}}&\stackrel{d}{=}\liminf_{t\rightarrow\infty}\frac{\mathcal{S}(D^\alpha_f(t) Y_{\alpha}(1))}{(g(t))^{\alpha}}\\
		&=\liminf_{t\rightarrow\infty}\Big(\frac{\mathcal{S}(D^\alpha_f(t) Y_{\alpha}(1))}{D^\alpha_f(t) Y_{\alpha}(1)}\Big)\frac{D^\alpha_f(t) Y_{\alpha}(1)}{(g(t))^{\alpha}}\\
		&\stackrel{d}{=}\sum_{j=1}^{k}j(\lambda_{j}-\mu_{j}) Y_{\alpha}(1)\Big(\liminf_{t\rightarrow\infty}\frac{D_{f}(t)}{g(t)}\Big)^{\alpha},
	\end{align*}
	where in the last step we have used the fact that $D_f(t)\to\infty$ as $t\to\infty$, a.s. (see Bertoin (1996), p. 73) and $\eqref{limitS}.$ Thus, the proof is completed on using the following law of iterated logarithm of L\'evy subordinator (see Bertoin (1996), Theorem 14, p. 92):
	\begin{equation*}\label{LIL}
		\liminf_{t\to\infty}\frac{D_{f}(t)}{g(t)}=\theta(1-\theta)^{(1-\theta)/\theta},\ \ \text{a.s.}
	\end{equation*}
\end{proof}
\begin{remark}
	On taking $\lambda_{j}=\lambda$ and $\mu_{j}=\mu$,  $j=1,2,\dots,k$  in (\ref{lil}), we get the law of iterated logarithm for a time-changed variant of the fractional Skellam process of order $k$ (see Kataria and Khandakar (2024), Theorem 4).
	
Also, on taking $\alpha=1$, $\lambda_{j}=\lambda$ and $\mu_{j}\to 0$,  $j=1,2,\dots,k$  in (\ref{lil}), we get the law of iterated logarithm for a time-changed variant of the Poisson process of order $k$ (see Sengar {\it et al.} (2020), Theorem 3.4).
\end{remark}
\begin{remark}
	
From \eqref{self}, observe that
	\begin{align*}
		\lim_{t \to \infty} \frac{\mathcal{Z}^\alpha_f(t)}{t^\alpha}&\overset{d}{=} Y_\alpha(1) \lim_{t \to \infty} \frac{\mathcal{S} \big( D^\alpha_f(t) Y_\alpha(1) \big)}{D^\alpha_f(t) Y_\alpha(1)}  \frac{D^\alpha_f(t)}{t^\alpha}\\
		&\overset{d}{=} \sum_{j=1}^{k} j (\lambda_j-\mu_j) Y_\alpha(1) \lim_{t \to \infty}  \frac{D^\alpha_f(t)}{t^\alpha} , \ \  (\text{using \eqref{limitS}})\\
		&\overset{d}{=} \sum_{j=1}^{k} j (\lambda_j-\mu_j) Y_\alpha(1) \big( \mathbb{E} \big( D_f(1) \big) \big)^\alpha,
	\end{align*}
	where the last step follows from the strong law of large numbers of a L\'evy subordinator (see Bertoin (1996), p. 92).
	
	It is known that $ Y_\alpha(1) $ is not infinitely divisible (see Vellaisamy and Kumar (2018)). Thus, using a contradiction argument, it can be shown that the one-dimensional distributions of TCGFSP-I are not infinitely divisible.
	
\end{remark}
\subsection{Some special cases of the TCGFSP-I}
In this subsection, we discuss three special cases of TCGFSP-I  and TCGSP-I by taking three specific L\'evy subordinators, namely, the gamma subordinator, the tempered stable subordinator (TSS) and the inverse Gaussian subordinator (IGS) as a time-change component in the GFSP and GSP.
\subsubsection{GFSP time-changed by gamma subordinator} 
Let $\{Z(t)\}_{t\ge0}$ be a gamma subordinator whose associated   Bern\v stein function is (see Applebaum (2009), p. 55)
\begin{equation*}
	f_{1}(s)=b\log(1+s/a), \ \ a>0, \  b>0,  \ s>0.
\end{equation*}
 Its pdf $g(x,t)$ is given by
\begin{equation*}
	g(x,t)=\frac{a^{bt}}{\Gamma(bt)}x^{bt-1}e^{-ax},\  x>0,
\end{equation*}
which satisfies the following fractional differential equation (see Beghin and Vellaisamy (2018), Lemma 2.2):
\begin{equation}\label{diffgam}
\mathbb{D}_t^{\gamma}g(x,t)=b \mathbb{D}_t^{\gamma-1}\big(\log(a x)-\kappa(bt)\big)g(x,t), \ 
	g(x,0)=0.  
\end{equation}
 Here, $\gamma\ge1$,  $\kappa(x)\coloneqq\Gamma^{\prime}(x)/\Gamma(x)$ is the digamma function and $\mathbb{D}_t^{\gamma}$ is the Riemann-Liouville fractional derivative defined in (\ref{RLd}).	

On taking $f_1$ as the  Bern\v stein function in \eqref{qws11ww1}, we get the  GFSP time-changed by an independent gamma subordinator as 
\begin{equation}\label{fgam}
	\mathcal{Z}^\alpha_{f_{1}}(t)\coloneqq \mathcal{S}^\alpha
	(Z(t)),\ t\ge0.
\end{equation}

\begin{proposition}
	Let $\gamma\ge1$ and $\kappa(x)$ be the digamma function. Then, the pmf $p^\alpha_{f_{1}}(n,t)=\mathrm{Pr}\{\mathcal{Z}^\alpha_{f_{1}}(t)=n\}$, $n\in\mathbb{Z}$ of $\{\mathcal{Z}^\alpha_{f_{1}}(t)\}_{t \ge 0}$ solves the following differential equation:
	\begin{equation*}
		\mathbb{D}_t^{\gamma}p^\alpha_{f_{1}}(n,t)=b \mathbb{D}_t^{\gamma-1}\big(\log (a)-\kappa(bt)\big)p^\alpha_{f_{1}}(n,t)+b\int_{0}^{\infty}p^\alpha(n,x)\log (x) \mathbb{D}_t^{\gamma-1}g(x,t)\,\mathrm{d}x,
	\end{equation*}
	where $p^\alpha(n,x)=\mathrm{Pr}\{\mathcal{S}^\alpha(x)=n\}$ is the pmf of GFSP.
\end{proposition}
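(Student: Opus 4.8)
The plan is to condition on the gamma subordinator and then differentiate under the integral sign, reducing everything to the governing equation \eqref{diffgam} for the density $g(x,t)$. By \eqref{fgam} and the independence of $\{\mathcal{S}^\alpha(t)\}_{t\ge0}$ and $\{Z(t)\}_{t\ge0}$, the marginal pmf admits the mixture representation
\begin{equation*}
	p^\alpha_{f_{1}}(n,t)=\int_{0}^{\infty}p^\alpha(n,x)\,g(x,t)\,\mathrm{d}x,\ \ n\in\mathbb{Z}.
\end{equation*}
Since $p^\alpha(n,x)$ carries no $t$-dependence, I would apply the Riemann--Liouville operator $\mathbb{D}_t^{\gamma}$ of \eqref{RLd} to both sides and move it inside the $x$-integral, giving
\begin{equation*}
	\mathbb{D}_t^{\gamma}p^\alpha_{f_{1}}(n,t)=\int_{0}^{\infty}p^\alpha(n,x)\,\mathbb{D}_t^{\gamma}g(x,t)\,\mathrm{d}x.
\end{equation*}

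Next I would substitute \eqref{diffgam} for $\mathbb{D}_t^{\gamma}g(x,t)$, write $\log(ax)=\log(a)+\log(x)$, and use the linearity of $\mathbb{D}_t^{\gamma-1}$ to split the integrand into a part carrying the $x$-free factor $\log(a)-\kappa(bt)$ and a part carrying $\log(x)$. For the first part I would interchange $\mathbb{D}_t^{\gamma-1}$ with the $x$-integral once more and pull the $x$-free factor out, recognizing $\int_0^\infty p^\alpha(n,x)g(x,t)\,\mathrm{d}x=p^\alpha_{f_{1}}(n,t)$; this yields the term $b\,\mathbb{D}_t^{\gamma-1}\big[(\log(a)-\kappa(bt))p^\alpha_{f_{1}}(n,t)\big]$. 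For the second part, since $\log(x)$ is free of $t$ it commutes with $\mathbb{D}_t^{\gamma-1}$, so $\mathbb{D}_t^{\gamma-1}[\log(x)g(x,t)]=\log(x)\,\mathbb{D}_t^{\gamma-1}g(x,t)$, which leaves exactly the integral term stated in the proposition. Adding the two contributions gives the asserted differential equation.

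The hard part will be the rigorous justification of the two interchanges of $\mathbb{D}_t^{\gamma}$ (equivalently, of the fractional integral and the $m$-fold ordinary derivative hidden in \eqref{RLd}) with the improper integral over $x\in(0,\infty)$. I would establish these by dominated convergence, using the uniform bound $|p^\alpha(n,x)|\le1$ together with the explicit exponential decay $e^{-ax}$ of the gamma density $g(x,t)$ in $x$: this decay persists for each $t$-derivative of $g$, which differs from $g$ only by polynomial and logarithmic factors in $x$ and digamma-type factors in $t$, so on any compact $t$-interval the difference quotients defining the ordinary derivatives in \eqref{RLd} are dominated by an integrable function and the Fubini swap for the fractional integral is legitimate. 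The only extra requirement created by the factor $\log(x)$ is the integrability $\int_0^\infty|\log x|\,g(x,t)\,\mathrm{d}x<\infty$, which is immediate from the explicit gamma density.
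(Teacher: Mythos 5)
Your proposal is correct and takes essentially the same route as the paper's own proof: the mixture representation $p^\alpha_{f_{1}}(n,t)=\int_{0}^{\infty}p^\alpha(n,x)g(x,t)\,\mathrm{d}x$, differentiation under the integral sign, substitution of the governing equation \eqref{diffgam} for the gamma density, and the split $\log(ax)=\log(a)+\log(x)$ to isolate the $x$-free and $\log(x)$ terms. The only difference is that you additionally sketch a dominated-convergence justification for the interchanges of $\mathbb{D}_t^{\gamma}$ and $\mathbb{D}_t^{\gamma-1}$ with the improper integral, a point the paper passes over silently.
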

\begin{proof}
	From (\ref{fgam}), we have
	\begin{equation}\label{qaza122}
		p^\alpha_{f_{1}}(n,t)=\int_{0}^{\infty}p^\alpha(n,x)g(x,t)\,\mathrm{d}x.
	\end{equation}
	Taking the Riemann-Liouville fractional derivative in (\ref{qaza122}) and using \eqref{diffgam}, we get
	\begin{align*}
		\mathbb{D}_t^{\gamma}p^\alpha_{f_{1}}(n,t)&=\int_{0}^{\infty}p^\alpha(n,x)\mathbb{D}_t^{\gamma}g(x,t)\,\mathrm{d}x\\
		&=b\int_{0}^{\infty}p^\alpha(n,x)\mathbb{D}_t^{\gamma-1}\big(\log(a x)-\kappa(bt)\big)g(x,t)\,\mathrm{d}x \\
		&=b \mathbb{D}_t^{\gamma-1}\log (a) \int_{0}^{\infty}p^\alpha(n,x)g(x,t)\,\mathrm{d}x+b \int_{0}^{\infty} p^\alpha(n,x)\log (x)\mathbb{D}_t^{\gamma-1}g(x,t)\,\mathrm{d}x\\
		&\ \ - b \mathbb{D}_t^{\gamma-1}\kappa(bt)\int_{0}^{\infty}p^\alpha(n,x)g(x,t)\,\mathrm{d}x.
	\end{align*}
	The proof is completed on using (\ref{qaza122}).
\end{proof}

On using \eqref{meantcgfsp}-\eqref{vartcgfsp}, the mean, variance and covariance of $\{\mathcal{Z}^\alpha_{f_{1}}(t)\}_{t \ge 0}$ can be written as follows:
\begin{align}
	\mathbb{E}( \mathcal{Z}_{f_1}^{\alpha}(t) ) & = l_1 \mathbb{E}( Z^{\alpha}(t) ),\nonumber\\
	\text{Var}( \mathcal{Z}_{f_1}^{\alpha}(t) )&=  \mathbb{E}( Z^{\alpha}(t) ) ( l_2 - l_1^2\mathbb{E}( Z^{\alpha}(t) ) ) + 2d \mathbb{E}( Z^{2\alpha}(t) ),\label{varfgam}\\
	\text{Cov}(\mathcal{Z}_{f_1}^{\alpha}(s), \mathcal{Z}_{f_1}^{\alpha}(t))&= l_2\mathbb{E}( Z^{\alpha}(s) ) + d \mathbb{E}( Z^{2\alpha}(s) ) - l_1^2 \mathbb{E}( Z^{\alpha}(s) ) \mathbb{E}( Z^{\alpha}(t) ) \nonumber \\
	&\hspace{1cm}+  l_1^2 \alpha \mathbb{E}\big( Z^{2\alpha}(t) B(\alpha, \alpha +1; {Z(s)}/{Z(t)} )\big)\label{covfgam}.
\end{align}
Note that moments of the gamma subordinator
has the following asymptotic expansion as $t\to \infty$ (see Maheshwari and Vellaisamy (2019), Remark 3.3):
\begin{equation}\label{stir}
	\mathbb{E}(Z^\alpha(t))\sim({bt}/{a})^\alpha,\ \ \mathbb{E}(Z^{2\alpha}(t))\sim({bt}/{a})^{2\alpha}.
\end{equation}
Thus, from Theorem \eqref{lrd}, it follows that the process $ \{\mathcal{Z}^\alpha_{f_1}(t)\}_{t \geq 0} $ exhibits the LRD property.

For a fixed $ h > 0 $, the increment process of  $\{\mathcal{Z}_{f_1}^{\alpha}(t)\}_{t\ge0}$ is defined as
\begin{equation*}
	\mathcal{S}_h^\alpha(t)\coloneqq\mathcal{Z}_{f_1}^\alpha(t+h)-\mathcal{Z}_{f_1}^\alpha(t) =\mathcal{S}^\alpha(Z(t + h)) - \mathcal{S}^\alpha(Z(t)), \ \ t \geq 0.
\end{equation*}
Next, we show that the increment process $ \{ \mathcal{S}_h^\alpha(t) \}_{t \geq 0} $ exhibits the SRD property. For this purpose the following asymptotic  result of gamma subordinator will be used (see Maheshwari and Vellaisamy (2016), Lemma 2): Let $h>0$ be fixed and $0< \alpha <1$. Then for large $t$, we have
\begin{equation}\label{gamaasym3}
	\mathbb{E}\big(Z^{2\alpha}(t+h)B(\alpha,1+\alpha;Z(t)/Z(t+h))\big) \sim B(\alpha,1+\alpha)\mathbb{E}(Z^{2\alpha}(t+h)).
\end{equation}

\begin{theorem}
	The process $ \{ \mathcal{S}_h^\alpha(t) \}_{t \geq 0} $ has the SRD property.
\end{theorem}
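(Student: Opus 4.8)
The plan is to compute the large‑$t$ asymptotics (with $s$ and $h>0$ fixed) of both $\operatorname{Cov}\big(\mathcal{S}_h^\alpha(s),\mathcal{S}_h^\alpha(t)\big)$ and $\operatorname{Var}\big(\mathcal{S}_h^\alpha(t)\big)$, and then to read off the decay exponent of the correlation. First I would use bilinearity to expand
\begin{align*}
\operatorname{Cov}\big(\mathcal{S}_h^\alpha(s),\mathcal{S}_h^\alpha(t)\big)&=\operatorname{Cov}\big(\mathcal{Z}_{f_1}^\alpha(s+h),\mathcal{Z}_{f_1}^\alpha(t+h)\big)-\operatorname{Cov}\big(\mathcal{Z}_{f_1}^\alpha(s+h),\mathcal{Z}_{f_1}^\alpha(t)\big)\\
&\quad-\operatorname{Cov}\big(\mathcal{Z}_{f_1}^\alpha(s),\mathcal{Z}_{f_1}^\alpha(t+h)\big)+\operatorname{Cov}\big(\mathcal{Z}_{f_1}^\alpha(s),\mathcal{Z}_{f_1}^\alpha(t)\big),
\end{align*}
and substitute \eqref{covfgam} into each term (in all four the smaller argument is drawn from $\{s,s+h\}$, since $t$ is large). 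The two pieces of \eqref{covfgam} that depend only on the smaller argument, namely $l_2\mathbb{E}(Z^\alpha(\cdot))+d\mathbb{E}(Z^{2\alpha}(\cdot))$, cancel in pairs. The product pieces collapse to $-l_1^2\big(\mathbb{E}(Z^\alpha(s+h))-\mathbb{E}(Z^\alpha(s))\big)\big(\mathbb{E}(Z^\alpha(t+h))-\mathbb{E}(Z^\alpha(t))\big)$, which by the mean‑value estimate $\mathbb{E}(Z^\alpha(t+h))-\mathbb{E}(Z^\alpha(t))\sim (b/a)^\alpha\alpha h\,t^{\alpha-1}$ (from \eqref{stir}) is of order $t^{\alpha-1}$.

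For the remaining incomplete‑beta pieces I would invoke the off‑diagonal asymptotics \eqref{  Levyasymbeta}, which applies precisely because each ratio $Z(a)/Z(b)$ with $a\in\{s,s+h\}$ fixed and $b\in\{t,t+h\}$ large tends to $0$. A short computation then shows that, at leading order, the beta‑term combination exactly cancels the $t^{\alpha-1}$ contribution of the product terms. Hence $\operatorname{Cov}\big(\mathcal{S}_h^\alpha(s),\mathcal{S}_h^\alpha(t)\big)$ is of strictly smaller order, and extracting the next term — most cleanly by writing $Z^\alpha(t+h)-Z^\alpha(t)\approx\alpha Z^{\alpha-1}(t)\big(Z(t+h)-Z(t)\big)$, using the independence of the increment $Z(t+h)-Z(t)$ from the past, and invoking $\mathbb{E}\big(Z^{\alpha-2}(t-s-h)\big)\sim (b/a)^{\alpha-2}t^{\alpha-2}$ — yields $\operatorname{Cov}\big(\mathcal{S}_h^\alpha(s),\mathcal{S}_h^\alpha(t)\big)\sim C(s)\,t^{\alpha-2}$ for a constant $C(s)$.

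In parallel I would treat the variance, where now $s=t$ so the relevant ratio $Z(t)/Z(t+h)\to1$ and the correct tool is the diagonal asymptotics \eqref{gamaasym3} rather than \eqref{  Levyasymbeta}. Substituting \eqref{vartcgfsp}, \eqref{covfgam} into $\operatorname{Var}\big(\mathcal{S}_h^\alpha(t)\big)=\operatorname{Var}\big(\mathcal{Z}_{f_1}^\alpha(t+h)\big)+\operatorname{Var}\big(\mathcal{Z}_{f_1}^\alpha(t)\big)-2\operatorname{Cov}\big(\mathcal{Z}_{f_1}^\alpha(t),\mathcal{Z}_{f_1}^\alpha(t+h)\big)$ and using $d=\alpha l_1^2B(\alpha,1+\alpha)$, the dominant $t^{2\alpha}$ contributions (the $\mathbb{E}(Z^{2\alpha})$ terms) cancel; the surviving leading behaviour is $l_2\big(\mathbb{E}(Z^\alpha(t+h))-\mathbb{E}(Z^\alpha(t))\big)$ together with the $t^{\alpha-1}$ correction from the subleading part of \eqref{gamaasym3}, so that $\operatorname{Var}\big(\mathcal{S}_h^\alpha(t)\big)\sim B\,t^{\alpha-1}$ with $B>0$ (no further cancellation occurs because $l_2=m_2/\Gamma(1+\alpha)>0$). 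Dividing gives
\[
\operatorname{Corr}\big(\mathcal{S}_h^\alpha(s),\mathcal{S}_h^\alpha(t)\big)\sim\frac{C(s)\,t^{\alpha-2}}{\sqrt{\operatorname{Var}\big(\mathcal{S}_h^\alpha(s)\big)}\,\sqrt{B\,t^{\alpha-1}}}=c(s)\,t^{-(3-\alpha)/2},
\]
and since $(3-\alpha)/2\in(1,3/2)\subset(1,2)$ for every $\alpha\in(0,1)$, the increment process exhibits the SRD property.

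The hard part will be the two successive cancellations: the leading $t^{2\alpha}$ terms cancel in the variance and the leading $t^{\alpha-1}$ terms cancel in the covariance, so the declared decay rate is governed entirely by next‑order asymptotics. This forces one to go beyond the leading relations \eqref{stir}, \eqref{  Levyasymbeta}, \eqref{gamaasym3} and control their subleading corrections — for instance through the Stirling expansion of $\mathbb{E}\big(Z^\beta(t)\big)=\Gamma(bt+\beta)/\big(a^\beta\Gamma(bt)\big)$ and a Taylor expansion of $Z^\alpha(t+h)-Z^\alpha(t)$. Securing the orders $t^{\alpha-2}$ and $t^{\alpha-1}$ (and verifying that the constant $B$ is genuinely positive) is the main technical obstacle; once these orders are established, the classification $\gamma=(3-\alpha)/2\in(1,2)$ is immediate.
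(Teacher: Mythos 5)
Your proposal is correct and takes essentially the same route as the paper's proof: the same four-term bilinear decomposition of $\operatorname{Cov}\big(\mathcal{S}_h^\alpha(s),\mathcal{S}_h^\alpha(t)\big)$ and of $\operatorname{Var}\big(\mathcal{S}_h^\alpha(t)\big)$, the same inputs \eqref{covfgam}, \eqref{varfgam}, \eqref{stir}, \eqref{  Levyasymbeta} and \eqref{gamaasym3}, the same two-stage cancellation (the $t^{2\alpha}$ terms in the variance, the $t^{\alpha-1}$ terms in the covariance), and the same resulting exponent $(3-\alpha)/2\in(1,2)$. The only differences are organizational — you group the product and incomplete-beta contributions across the four covariances, whereas the paper first combines them within each covariance via \eqref{cov prs asym} and then factors the four-fold sum as a product of differences — and you are more explicit than the paper about needing control of subleading corrections once leading terms cancel.
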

\begin{proof}
	 Let $s>0$ be fixed such that $0 <s+h \le t$. Then,
\begin{align}\label{pf1}
	\text{Cov}\big(\mathcal{S}_h^{\alpha}(s), \mathcal{S}_h^{\alpha}(t)\big)&=\text{Cov}\big(\mathcal{Z}^{\alpha}_{f_1}(s+h)-\mathcal{Z}^{\alpha}_{f_1}(s),\mathcal{Z}^{\alpha}_{f_1}(t+h)-\mathcal{Z}^{\alpha}_{f_1}(t)\big)\nonumber\\&=\text{Cov}\big(\mathcal{Z}^{\alpha}_{f_1}(s+h),\mathcal{Z}^{\alpha}_{f_1}(t+h)\big)+\text{Cov}\big(\mathcal{Z}^{\alpha}_{f_1}(s),\mathcal{Z}^{\alpha}_{f_1}(t)\big)\nonumber\\&\hspace{1cm}-\text{Cov}\big(\mathcal{Z}^{\alpha}_{f_1}(s+h),\mathcal{Z}^{\alpha}_{f_1}(t)\big)-\text{Cov}\big(\mathcal{Z}^{\alpha}_{f_1}(s),\mathcal{Z}^{\alpha}_{f_1}(t+h)\big).
\end{align}
By taking gamma subordinator in \eqref{cov prs asym} and using \eqref{stir} for large $t$, we get  
\begin{equation}\label{pf2}
	\text{Cov}\big(\mathcal{Z}^{\alpha}_{f_1}(s), \mathcal{Z}^{\alpha}_{f_1}(t)\big) \sim l_2 \mathbb{E}(Z^\alpha(s)) + d\mathbb{E}(Z^{2\alpha}(s)) - l_1^2({b}/{a})^\alpha \mathbb{E}(Z^\alpha(s))\alpha s t^{\alpha-1}.
\end{equation}
Combining \eqref{pf1} and \eqref{pf2}, we deduce the following as $t \to \infty$:
\begin{align}\label{proof1}
	\text{Cov}(\mathcal{S}_h^{\alpha}(s), \mathcal{S}_h^{\alpha}(t))
	&\sim-l_1^2({b}/{a})^\alpha \alpha \big((s+h)\mathbb{E}(Z^\alpha(s+h))(t+h)^{\alpha -1}+s\mathbb{E}(Z^\alpha(s))t^{\alpha -1}\nonumber\\
	& \hspace{1cm}-(s+h)\mathbb{E}(Z^\alpha(s+h))t^{\alpha -1}-s\mathbb{E}(Z^\alpha(s))(t+h)^{\alpha -1}\big)\nonumber\\
	&= -l_1^2({b}/{a})^\alpha \alpha \big((s+h)\mathbb{E}(Z^\alpha(s+h))-s\mathbb{E}(Z^\alpha(s))\big)\big((t+h)^{\alpha -1}-t^{\alpha -1}\big)\nonumber\\
	&\sim l_1^2({b}/{a})^\alpha\alpha (1-\alpha )h\big((s+h)\mathbb{E}(Z^\alpha(s+h))-s\mathbb{E}(Z^\alpha(s))\big)t^{\alpha -2}.
\end{align}
From \eqref{varfgam} and \eqref{covfgam}, we get
\begin{align}\label{proof2}
	\text{Var}(\mathcal{S}_h^\alpha(t))
	&= \text{Var}(\mathcal{Z}^{\alpha}_{f_1}(t+h))+ \text{Var}(\mathcal{Z}^{\alpha}_{f_1}(t))-2\text{Cov}(\mathcal{Z}^{\alpha}_{f_1}(t),\mathcal{Z}^{\alpha}_{f_1}(t+h))\nonumber\\
	&=l_2 \big(\mathbb{E}( Z^{\alpha}(t+h) )-\mathbb{E}( Z^{\alpha}(t) )\big)  - l_1^2\big(\mathbb{E}( Z^{\alpha}(t+h) )-\mathbb{E}( Z^{\alpha}(t) )\big)^2 \nonumber \\
	& \hspace{1cm}+ 2d \mathbb{E}(Z^{2\alpha}(t+h) )- 2l_1^2 \alpha \mathbb{E}\big( Z^{2\alpha}(t+h) B(\alpha, 1 + \alpha; {Z(t)}/{Z(t+h)}) \big)\nonumber\\
	&\sim l_2 ({b}/{a})^\alpha ((t+h)^{\alpha }-t^\alpha )-l_1^2({b}/{a})^{2\alpha }((t+h)^{\alpha }-t^\alpha )^2, \ (\text{using} \ \eqref{stir} \  \text{and} \ \eqref{gamaasym3})\nonumber\\
	&\sim l_2({b}/{a})^\alpha \alpha  ht^{\alpha -1}-l_1^2({b}/{a})^{2\alpha }\alpha ^2h^2t^{2\alpha -2}\nonumber\\&\sim l_2({b}/{a})^\alpha \alpha  ht^{\alpha -1}.
\end{align}
From \eqref{proof1} and \eqref{proof2}, we have 
\begin{align*}
	\text{Corr}(\mathcal{S}_h^{\alpha}(s),\mathcal{S}_h^{\alpha}(t))
	&\sim\frac{l_1^2({b}/{a})^\alpha \alpha (1-\alpha )h\big((s+h)\mathbb{E}(Z^\alpha(s+h))-s\mathbb{E}(Z^\alpha(s))\big)t^{\alpha -2}}{\sqrt{\text{Var}(\mathcal{S}_h^\alpha(s)}\sqrt{l_2({b}/{a})^\alpha \alpha  ht^{\alpha -1}}}\\
	&\sim c_1(s)t^{-{(3-\alpha )}/{2}},
\end{align*}
where 
\begin{equation*}
	c_1(s)=\frac{l_1^2({b}/{a})^\alpha \alpha (1-\alpha )h((s+h)\mathbb{E}(Z^\alpha(s+h))-s\mathbb{E}(Z^\alpha(s)))}{\sqrt{\text{Var}(\mathcal{S}_h^\alpha(s))}\sqrt{l_2({b}/{a})^\alpha \alpha  h}}.
\end{equation*} 
Thus, the process $ \{ \mathcal{S}_h^\alpha(t) \}_{t \geq 0} $ exhibits the SRD property as $1< (3-\alpha)/2<3/2$.
\end{proof}
For the Bern\v stein function $f_1$, the TCGSP-I $\{\mathcal{Z}_{f}(t)\}_{t\ge0}$ reduces to the GSP time-changed by an independent gamma subordinator. That is,
\begin{equation}\label{gam}
	\mathcal{Z}_{f_{1}}(t)\coloneqq \mathcal{S}
	(Z(t)).
\end{equation}
Next, we obtain a system of differential equations that governs the state probabilities $p_{f_{1}}(n,t)=\mathrm{Pr}\{\mathcal{Z}_{f_{1}}(t)=n\}$, $n\in\mathbb{Z}$ of $\{\mathcal{Z}_{f_{1}}(t)\}_{t \ge 0}$. For this purpose, the following differential equation for the pdf of gamma subordinator will be used (see Beghin (2015b), Eq. 9):
\begin{equation}\label{res}
	\left\{
	\begin{array}{l}
		\dfrac{\partial }{\partial x}g(x,t)=-b(1-e^{-\partial _{t}/a})g(x,t),
		\\
		g(x,0)=\delta (x), \\
		\lim_{|x|\rightarrow +\infty }g(x,t)=0,
	\end{array}
	\right.  
\end{equation}%
where $x\ge0$, $t\geq 0$, $\partial_{t}=\frac{\partial}{\partial t}$, $\delta (x)$ is the Dirac delta function and $e^{-\partial _{t}/a}$ is a  shift operator which is defined for any analytic function $f:\mathbb{R}\to \mathbb{R}$ as follows:
\begin{equation*}\label{78}
	e^{c\partial_{t}}f(t)\coloneqq \sum_{n=0}^{\infty}\frac{(c\partial_{t})^{n}}{n!}f(t)=f(t+c), \ \  c \in \mathbb{R} .
\end{equation*}
\begin{proposition}
	The pmf $p_{f_1}(n,t)$, $n\in\mathbb{Z}$ satisfies the following differential equation:
	\begin{equation*}
		b\big(e^{-\partial_t / a}-1\big) p_{f_1}(n,t) = \Lambda \big(p_{f_1}(n-1,t)-p_{f_1}(n,t)\big) +\bar{\Lambda} \big(p_{f_1}(n,t)-p_{f_1}(n+1,t)\big),
	\end{equation*}
	with initial conditions
	\begin{equation*}
		p_{f_1}(n,0) =
		\begin{cases}
			1, & n = 0, \\
			0, & n \neq 0.
		\end{cases}
	\end{equation*}
	\begin{proof}
		
 From \eqref{gam}, we can write
	\begin{equation}\label{45}
		p_{f_1}(n,t) = \int_0^{\infty} p(n,x) g(x,t)\mathrm{d}x, 
	\end{equation}
	where $p(n,x)$ is the pmf of GSP given in \eqref{pmfgsp}.
	
	We apply the shift operator $e^{-\partial_t / a}$ to both sides of \eqref{45} and use  \eqref{res} to get
{\small	\begin{align*}
		e^{-\partial_t / a} p_{f_1}(n,t) &= \int_0^{\infty} p(n,x) \Big( g(x,t) + \frac{1}{b} \frac{\partial}{\partial x} g(x,t) \Big)\mathrm{d}x \\
		&= p_{f_1}(n,t) + \frac{1}{b} \Big( p(n,x) g(x,t) \Big|_{x=0}^{\infty} - \int_0^{\infty} g(x,t) \frac{\mathrm{d}}{\mathrm{d}x} p(n,x)\mathrm{d}x \Big) \\
		&= p_{f_1}(n,t) - \frac{1}{b} \int_0^{\infty} g(x,t) \Big( \Lambda \big(p(n-1,x)-p(n,x)\big)-\bar{\Lambda}\big(p(n,x)-p(n+1,x)\big)\Big)\mathrm{d}x,
	\end{align*}}
	where in the last step we have used \eqref{digsp}. Now, the required result follows on using \eqref{45}.
		\end{proof}
\end{proposition}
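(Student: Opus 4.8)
The plan is to transfer the known evolution equation \eqref{res} for the gamma density onto the state probabilities of $\{\mathcal{Z}_{f_1}(t)\}_{t\ge0}$ through the subordination representation. Writing $g(x,t)$ for the density of the gamma subordinator and $p(n,x)=\mathrm{Pr}\{\mathcal{S}(x)=n\}$ for the GSP pmf of \eqref{pmfgsp}, conditioning on $Z(t)$ in \eqref{gam} gives the mixture representation $p_{f_1}(n,t)=\int_{0}^{\infty}p(n,x)\,g(x,t)\,\mathrm{d}x$. Since the factor $p(n,x)$ is free of $t$, any operator acting in the time variable passes through the integral and falls only on $g(x,t)$; this is what makes the propagation possible.

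First I would apply the shift operator $e^{-\partial_t/a}$ to both sides of this representation. Rewriting \eqref{res} as $e^{-\partial_t/a}g(x,t)=g(x,t)+\tfrac{1}{b}\,\partial_x g(x,t)$ turns the right-hand side into $p_{f_1}(n,t)+\tfrac{1}{b}\int_{0}^{\infty}p(n,x)\,\partial_x g(x,t)\,\mathrm{d}x$. Next I would integrate by parts in $x$ to move the derivative from $g$ onto $p(n,\cdot)$; the boundary contribution $p(n,x)g(x,t)\big|_{x=0}^{\infty}$ should drop out by the decay $\lim_{|x|\to\infty}g(x,t)=0$ and the boundary data for $g$ at the origin supplied by \eqref{res}. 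This leaves $e^{-\partial_t/a}p_{f_1}(n,t)=p_{f_1}(n,t)-\tfrac{1}{b}\int_{0}^{\infty}g(x,t)\,\tfrac{\mathrm{d}}{\mathrm{d}x}p(n,x)\,\mathrm{d}x$.

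The crux is then to eliminate $\tfrac{\mathrm{d}}{\mathrm{d}x}p(n,x)$ using the forward equation \eqref{digsp} of the GSP, which expresses it as the spatial difference $\Lambda(p(n-1,x)-p(n,x))-\bar{\Lambda}(p(n,x)-p(n+1,x))$. Substituting this, each resulting integral collapses back through the mixture representation to $p_{f_1}(n-1,t)$, $p_{f_1}(n,t)$ and $p_{f_1}(n+1,t)$, and a final rearrangement collects the operator $b(e^{-\partial_t/a}-1)p_{f_1}(n,t)$ on one side and the difference terms on the other, giving the asserted governing equation. For the initial condition I would use $g(x,0)=\delta(x)$ from \eqref{res}, so that $p_{f_1}(n,0)=p(n,0)$, which is $1$ for $n=0$ and $0$ otherwise by the initialisation of the GSP.

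I expect the genuine difficulties to be analytic rather than algebraic. The two points needing care are the interchange of the nonlocal shift operator $e^{-\partial_t/a}$ with the $x$-integration, and the vanishing of the boundary term in the integration by parts: since the behaviour of $g(x,t)$ as $x\to0^{+}$ changes according to the sign of $bt-1$, this last step leans on the boundary conditions recorded in \eqref{res}. Once these are granted, the remainder is a routine rearrangement of the three difference terms.
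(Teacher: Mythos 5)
Your proposal is essentially identical to the paper's own proof: both start from the subordination representation $p_{f_1}(n,t)=\int_0^\infty p(n,x)\,g(x,t)\,\mathrm{d}x$, apply the shift operator $e^{-\partial_t/a}$ through the integral using \eqref{res}, integrate by parts in $x$, and eliminate $\tfrac{\mathrm{d}}{\mathrm{d}x}p(n,x)$ via \eqref{digsp} before collapsing the integrals back through the mixture representation. The only difference is that you explicitly flag the analytic points (the operator--integral interchange and the vanishing of the boundary term $p(n,x)g(x,t)\big|_{x=0}^{\infty}$) that the paper passes over in silence.
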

\subsubsection{GSP time-changed by tempered stable subordinator}
 Let $f_{2}(s)$ be the Bern\v stein function associated with TSS $\{\mathscr{D}_{\eta,\theta}(t)\}_{t\ge0}$ with stability index  $0<\theta<1$   and  tempering parameter $\eta>0$. Then
\begin{equation}\label{bs1}
	f_{2}(s)=(\eta+s)^{\theta}-\eta^{\theta},\ \ s>0.
\end{equation}
Let $\delta_0(x)$ denote the Dirac delta function. The pdf $ h_{\eta, \theta}(x, t) $ of TSS
 solves the following (see Beghin (2015a), Eq. (15)):
\begin{equation}\label{difftss1}
	\frac{\partial}{\partial x}h_{\eta,\theta}(x,t)=-\eta h_{\eta,\theta}(x,t)+\Big(\eta^{\theta}-\frac{\partial}{\partial t}\Big)^{1/\theta}h_{\eta,\theta}(x,t),
\end{equation}
with the initial conditions $h_{\eta,\theta}(x,0)=\delta_0(x)$ and $h_{\eta,\theta}(0,t)=0$.

We define the GSP time-changed by an independent TSS  by taking $f_{2}$ as the Bern\v stein function in \eqref{zft} as follows:
\begin{equation}\label{tss}
	\mathcal{Z}_{f_{2}}(t)\coloneqq \mathcal{S}(\mathscr{D}_{\eta,\theta}(t)),\ \ t\ge0.
\end{equation}	

\begin{proposition}
	The pmf $p_{f_{2}}(n,t)=\mathrm{Pr}\{\mathcal{Z}_{f_{2}}(t)=n\}$ of  $\{\mathcal{Z}_{f_{2}}(t)\}_{t\ge0}$ satisfies the following system of differential equations:
	\begin{equation*}
		\Big(\eta^{\theta}-\frac{\mathrm{d}}{\mathrm{d} t}\Big)^{1/\theta}p_{f_{2}}(n,t)=(\eta +\Lambda+\bar{\Lambda}) p_{f_{2}}(n,t)-\Lambda p_{f_{2}}(n-1,t)-\bar{\Lambda} p_{f_{2}}(n+1,t),\ \ n\in \mathbb{Z}.
	\end{equation*}
\end{proposition}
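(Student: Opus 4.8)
The plan is to follow the same template as the proof of the gamma-subordinator proposition: condition on the value of the subordinator, then transfer the governing PDE of the density onto the state probabilities. First I would write, using the independence in \eqref{tss} and conditioning on $\mathscr{D}_{\eta,\theta}(t)$,
\[
p_{f_2}(n,t)=\int_0^\infty p(n,x)\,h_{\eta,\theta}(x,t)\,\mathrm{d}x,
\]
where $p(n,x)$ is the pmf of the GSP given in \eqref{pmfgsp} and $h_{\eta,\theta}(x,t)$ is the pdf of the TSS.

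Next I would apply the operator $\big(\eta^\theta-\tfrac{\mathrm{d}}{\mathrm{d}t}\big)^{1/\theta}$ to both sides, pass it under the integral sign (where it acts only on the $t$-dependence of $h_{\eta,\theta}$), and invoke \eqref{difftss1} to replace $\big(\eta^\theta-\tfrac{\partial}{\partial t}\big)^{1/\theta}h_{\eta,\theta}(x,t)$ by $\tfrac{\partial}{\partial x}h_{\eta,\theta}(x,t)+\eta\,h_{\eta,\theta}(x,t)$. This produces the term $\eta\,p_{f_2}(n,t)$ together with $\int_0^\infty p(n,x)\,\tfrac{\partial}{\partial x}h_{\eta,\theta}(x,t)\,\mathrm{d}x$.

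I would then integrate the latter by parts in $x$. The boundary contributions vanish since $h_{\eta,\theta}(0,t)=0$ and $h_{\eta,\theta}(x,t)\to 0$ as $x\to\infty$, leaving $-\int_0^\infty h_{\eta,\theta}(x,t)\,\tfrac{\mathrm{d}}{\mathrm{d}x}p(n,x)\,\mathrm{d}x$. Substituting the GSP governing equation \eqref{digsp} for $\tfrac{\mathrm{d}}{\mathrm{d}x}p(n,x)$ and recognizing each resulting integral as a shifted state probability $p_{f_2}(n-1,t)$, $p_{f_2}(n+1,t)$, or $p_{f_2}(n,t)$, I would collect terms. Combining with the $\eta\,p_{f_2}(n,t)$ contribution then yields $(\eta+\Lambda+\bar\Lambda)p_{f_2}(n,t)-\Lambda p_{f_2}(n-1,t)-\bar\Lambda p_{f_2}(n+1,t)$, which is the claimed identity.

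The main obstacle is the justification of passing the fractional-type operator $\big(\eta^\theta-\tfrac{\mathrm{d}}{\mathrm{d}t}\big)^{1/\theta}$ inside the integral and the validity of the integration by parts; these rely on the integrability and smoothness of $h_{\eta,\theta}$ and on its boundary behavior, all of which are furnished by the stated properties of the TSS density in \eqref{difftss1}. Once these interchanges are granted, the remaining computation is routine algebra.
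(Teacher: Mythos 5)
Your proposal is correct and follows essentially the same route as the paper's proof: conditioning to get $p_{f_2}(n,t)=\int_0^\infty p(n,x)h_{\eta,\theta}(x,t)\,\mathrm{d}x$, rewriting $(\eta^\theta-\partial_t)^{1/\theta}h_{\eta,\theta}$ via \eqref{difftss1}, integrating by parts with vanishing boundary terms, and substituting \eqref{digsp}. The paper additionally cites Gupta {\it et al.} (2020b), Eq.\ (3.50) to justify $\lim_{x\to 0}h_{\eta,\theta}(x,t)=\lim_{x\to\infty}h_{\eta,\theta}(x,t)=0$, which is the same boundary-behavior fact you invoke.
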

\begin{proof}

	From (\ref{tss}), we have
	\begin{equation}\label{wslp09}
		p_{f_{2}}(n,t)=\int_{0}^{\infty}p(n,x)	h_{\eta,\theta}(x,t)\,\mathrm{d}x.
	\end{equation}
	On using \eqref{difftss1} in (\ref{wslp09}), we get 
	\begin{align*}
		\big(\eta^{\theta}-\frac{\mathrm{d}}{\mathrm{d} t}\big)^{1/\theta}p_{f_{2}}(n,t)&=\int_{0}^{\infty}p(n,x)\Big(\eta h_{\eta,\theta}(x,t)+\frac{\partial}{\partial x}	h_{\eta,\theta}(x,t)\Big)\,\mathrm{d}x \\
		&=\eta p_{f_{2}}(n,t)-\int_{0}^{\infty}h_{\eta,\theta}(x,t)\frac{\mathrm{d}}{\mathrm{d} x}p(n,x)	\,\mathrm{d}x\\
		&=\eta p_{f_{2}}(n,t)- \int_{0}^{\infty}\Big(\Lambda\big(p(n-1,x)-p(n,x)\big)\\
		&\hspace*{1.5cm}-\bar{\Lambda}\big(p(n,x)-p(n+1,x)\big)\Big)h_{\eta,\theta}(x,t)\,\mathrm{d}x,
	\end{align*}
	where in the last step we have used \eqref{digsp} and in the penultimate step we have used $\lim_{x\to 0}h_{\eta,\theta}(x,t)=\lim_{x\to \infty}h_{\eta,\theta}(x,t)=0$ (see Gupta {\it et al.} (2020b), Eq. (3.50)). Finally, the proof concludes in view of \eqref{wslp09}.
\end{proof}
In particular, the pmf $p_{f_{2}}(n,t)$ satisfies the following if $\theta^{-1}=m \ge 2$ is an integer:
\begin{equation*}
	\sum_{k=1}^m (-1)^k \binom{m}{k}\eta^{(1-k/m)}\frac{\mathrm{d}^k}{\mathrm{d} t^k}p_{f_{2}}(n,t)=(\Lambda+\bar{\Lambda}) p_{f_{2}}(n,t)-\Lambda p_{f_{2}}(n-1,t)-\bar{\Lambda} p_{f_{2}}(n+1,t).
\end{equation*}

\subsubsection{GSP time-changed by inverse Gaussian subordinator}
Let $\{Y(t)\}_{t\ge0}$ be an IGS with parameters $\delta>0$ and $\gamma>0$. Its associated Bern\v stein function is (see Applebaum (2009), Eq. (1.26))
\begin{equation}\label{ploiuy67}
	f_{3}(s)=\delta\big(\sqrt{2s+\gamma^2}-\gamma\big), \ \ s>0.
\end{equation}
Its pdf $q(x,t)$ is given by (see Applebaum (2009), Eq. (1.27))
\begin{equation*}
	q(x,t)=(2\pi)^{-1/2}\delta tx^{-3/2}\exp\big(\delta\gamma t-\frac{1}{2}(\delta^{2}t^{2}x^{-1}+\gamma^{2}x)\big), \ \ x>0,
\end{equation*}
which solves (see Vellaisamy and Kumar (2018), Eq. (3.3)) 
\begin{equation}\label{diffigs}
	\Big(\frac{\partial^2}{\partial t^2}-2\delta \gamma \frac{\partial}{\partial t}\Big) q(x,t)= 2\delta^2\frac{\partial}{\partial x}q(x,t).
\end{equation}
The GSP time-changed by an independent IGS is defined as
\begin{equation}\label{ig}
	\mathcal{Z}_{f_{3}}(t)\coloneqq \mathcal{S}(Y(t)),\ \ t\ge0.
\end{equation}

\begin{proposition}\label{p4.1}
	The pmf $p_{f_{3}}(n,t)=\mathrm{Pr}\{\mathcal{Z}_{f_{3}}(t)=n\}$, $n\in \mathbb{Z} $ of $\{\mathcal{Z}_{f_{3}(t)}\}_{t\ge 0}$ solves 
	\begin{equation*}
		\Big(\frac{\mathrm{d}^2}{\mathrm{d}t^2}-2\delta \gamma \frac{\mathrm{d}}{\mathrm{d}t}\Big) p_{f_{3}}(n,t)=-2\delta^2\Big(\Lambda\big(p_{f_{3}}(n-1,t)-p_{f_{3}}(n,t)\big)-\bar{\Lambda}\big(p_{f_{3}}(n,t)-p_{f_{3}}(n+1,t)\big)\Big).
	\end{equation*}
\end{proposition}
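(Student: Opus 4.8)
The plan is to follow the same conditioning-and-integration-by-parts scheme used for the gamma and tempered stable cases. First I would condition on $Y(t)$ and write, from \eqref{ig},
\begin{equation*}
	p_{f_{3}}(n,t)=\int_{0}^{\infty}p(n,x)\,q(x,t)\,\mathrm{d}x,
\end{equation*}
where $p(n,x)$ is the pmf of the GSP given in \eqref{pmfgsp}. Since $p(n,x)$ does not depend on $t$, applying the operator $\big(\frac{\mathrm{d}^2}{\mathrm{d}t^2}-2\delta\gamma\frac{\mathrm{d}}{\mathrm{d}t}\big)$ to both sides and carrying it under the integral sign lets me replace the $t$-derivatives of $q$ by an $x$-derivative via the governing equation \eqref{diffigs}, giving
\begin{equation*}
	\Big(\frac{\mathrm{d}^2}{\mathrm{d}t^2}-2\delta\gamma\frac{\mathrm{d}}{\mathrm{d}t}\Big)p_{f_{3}}(n,t)=2\delta^2\int_{0}^{\infty}p(n,x)\,\frac{\partial}{\partial x}q(x,t)\,\mathrm{d}x.
\end{equation*}

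Next I would integrate by parts in $x$. The boundary contribution $\big[p(n,x)q(x,t)\big]_{x=0}^{\infty}$ vanishes: as $x\to\infty$ the density $q(x,t)\to0$ while $p(n,x)$ stays bounded by $1$, and as $x\to0+$ the factor $\exp\big(-\tfrac{1}{2}\delta^{2}t^{2}x^{-1}\big)$ in $q(x,t)$ forces $q(x,t)\to0$ faster than the $x^{-3/2}$ prefactor can blow up. This reduces the right-hand side to $-2\delta^2\int_{0}^{\infty}q(x,t)\,\frac{\mathrm{d}}{\mathrm{d}x}p(n,x)\,\mathrm{d}x$.

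Finally I would substitute the GSP governing equation \eqref{digsp} for $\frac{\mathrm{d}}{\mathrm{d}x}p(n,x)$, split the integral term by term, and recognize each $\int_{0}^{\infty}q(x,t)\,p(m,x)\,\mathrm{d}x$ as $p_{f_{3}}(m,t)$ for $m=n-1,n,n+1$; collecting the $\Lambda$- and $\bar{\Lambda}$-contributions then yields exactly the claimed equation. The only delicate points are the interchange of the second-order $t$-operator with the integral and the vanishing of the boundary terms; both rest on the rapid decay of $q(x,t)$ and its $t$-derivatives at the two endpoints, which is guaranteed by the explicit Gaussian-type form of the inverse Gaussian density. I expect this endpoint/interchange justification to be the main (though routine) obstacle, everything else being a direct computation parallel to the earlier propositions.
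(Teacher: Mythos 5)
Your proposal is correct and follows essentially the same route as the paper: write $p_{f_3}(n,t)=\int_0^\infty p(n,x)q(x,t)\,\mathrm{d}x$, apply the operator under the integral and use \eqref{diffigs}, integrate by parts using $\lim_{x\to0}q(x,t)=\lim_{x\to\infty}q(x,t)=0$, and then substitute \eqref{digsp} and identify the resulting integrals as $p_{f_3}(n\mp1,t)$ and $p_{f_3}(n,t)$. Your explicit justification of the vanishing boundary terms via the $\exp(-\delta^2t^2/2x)$ factor is a slightly more detailed version of what the paper simply asserts.
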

\begin{proof}
	From (\ref{ig}), we have
	\begin{equation}\label{123}
		p_{f_{3}}(n,t)=\int_0^{\infty}p(n,x)q(x,t)\,\mathrm{d}x.
	\end{equation}	
By using \eqref{diffigs} in (\ref{123}), we get
	\begin{align*}  
		\Big(\frac{\mathrm{d}^2}{\mathrm{d} t^2}-2\delta \gamma \frac{\mathrm{d}}{\mathrm{d} t}\Big) p_{f_{3}}(n,t)
		&=\int_0^{\infty}p(n,x) \Big(\frac{\partial^2}{\partial t^2}-2\delta \gamma \frac{\partial}{\partial t}\Big) q(x,t)\,\mathrm{d}x \\
		&= 2\delta^2 \int_0^{\infty} p(n,x)\frac{\partial}{\partial x} q(x,t)\,\mathrm{d}x  \\ 
		&= -2\delta^2 \int_0^{\infty} q(x,t)\frac{\mathrm{d}}{\mathrm{d}x} p(n,x)\, \mathrm{d}x \\
		&=-2\delta^2\int_0^{\infty}\Big(\Lambda\big(p(n-1,x)-p(n,x)\big)  \\
		&\hspace*{1.5cm}-\bar{\Lambda}\big(p(n,x)-p(n+1,x)\big)\Big)q(x,t)\,\mathrm{d}x,
	\end{align*}
	where in the last step we have used \eqref{digsp} and in the penultimate step we have used $\lim_{x\to\infty}q(x,t)=\lim_{x\to 0}q(x,t)=0$. Now, the proof is completed on using \eqref{123}.
\end{proof}

\section{GFSP time-changed by inverse subordinator}\label{4}

In this section, we introduce another time-changed version of the GFSP  by using the inverse subordinator. Recall that the first passage time  of a L\'evy subordinator $\{D_f (t)\}_{t\ge0}$ is called the inverse subordinator $\{H_f (t)\}_{t\ge0}$. 
We note that $\mathbb{E}\big(H^r_f(t)\big)<\infty$ for all $r>0$ (see Aletti {\it et al.} (2018), Section 2.1). 

Let 
\begin{equation*}
	\bar{\mathcal{Z}}^{\alpha}_{f}(t)\coloneqq \mathcal{S}^{\alpha}(H_f (t)),\ \ t\ge0,
\end{equation*}
be the GFSP time changed by an independent inverse subordinator. We call the process $\{\bar{\mathcal{Z}}^{\alpha}_{f}(t)\}_{t\ge 0}$ the time-changed generalized fractional Skellam process II (TCGFSP-II). 
For $\alpha=1$, the TCGFSP-II reduces to the time-changed generalized Skellam process-II (TCGSP-II), that is, 
\begin{equation}\label{loppp1}
	\bar{\mathcal{Z}}_{f}(t)\coloneqq \mathcal{S}(H_f (t)),\ \ t\ge0.
\end{equation}
Next, we discuss some distributional properties of the TCGFSP-II and TCGSP-II.

The pgf of TCGFSP-II is given by
\begin{equation*}
\mathbb{E}\big(u^{\bar{\mathcal{Z}}_f^\alpha(t)}\big)=\sum_{n=0}^{\infty}\frac{\big(\sum_{j=1}^{k}\left(\lambda_{j}(u^{j}-1)+\mu_{j}(u^{-j}-1)\right)\big)^n}{\Gamma(n\alpha +1 )}\mathbb{E}\big((H_f(t))^{n\alpha }\big),\ \ |u|\le1,
\end{equation*}
whose proof follows similar lines to that of Theorem \ref{thpgfI}.

Let us assume that $0 < s \leq t < \infty$, and recall that $m_1=\sum_{j=1}^{k} j(\lambda_j-\mu_j)$, $m_2=\sum_{j=1}^{k} j^2(\lambda_j+\mu_j)$, $l_1 =\frac{m_1}{\Gamma(1 + \alpha)}$, $l_2 =\frac{m_2}{\Gamma(1 + \alpha)}$ and $d = \alpha l_1^2 B(\alpha, 1 + \alpha)$. Then,
\begin{align*}
	&(i)\ \mathbb{E}\big(\bar{\mathcal{Z}} _f^{\alpha}(t) \big) = l_1 \mathbb{E}\big( H_f^{\alpha}(t) \big),\\
	&(ii)\ \text{Var}\big( \bar{\mathcal{Z}}_f^{\alpha}(t) \big) =  \mathbb{E}\big( H_f^{\alpha}(t) \big) \big( l_2 - l_1^2\mathbb{E}\big( H_f^{\alpha}(t) \big) \big) + 2d \mathbb{E}\big( H_f^{2\alpha}(t) \big),\\
	&(iii)\ \text{Cov}\big(  \bar{\mathcal{Z}}_f^{\alpha}(s), \bar{\mathcal{Z}}_f^{\alpha}(t) \big) = l_2\mathbb{E}\big( H_f^{\alpha}(s) \big) + d \mathbb{E}\big( H_f^{2\alpha}(s) \big) - l_1^2 \mathbb{E}\big( H_f^{\alpha}(s) \big) \mathbb{E}\big( H_f^{\alpha}(t) \big) \\ &\ \  \hspace{8cm} + l_1^2 \alpha \mathbb{E}\big( H_f^{2\alpha}(t) B(\alpha, \alpha+1; {H_f(s)}/{H_f(t)}) \big).
\end{align*}
The proof of $(i)-(iii)$ follows similar lines to the corresponding results of TCGFSP-I (see Section \ref{3}). Thus, the proofs are omitted.

The pmf $\bar{p}_{f}(n,t)=\mathrm{Pr}\{\bar{\mathcal{Z}}_{f}(t)=n\}$ of TCGSP-II is given by 
\begin{equation*}
	\bar{p}_{f}(n,t)=\sum_{m=\max(0,-n)}^{\infty} \frac{\Lambda^{m+n} \bar{\Lambda}^{m} }{(m+n)! m!} \mathbb{E}\big(e^{-(\Lambda+\bar{\Lambda})H_{f}(t)} H_{f}^{2m+n}(t)\big), \ \ n \in \mathbb{Z}.
\end{equation*}
The proof of the above result follows similar lines to that of Theorem \ref{thpmfI}.
\begin{theorem}
	Let Condition \textbf{I} hold. Then, the marginal distributions of TCGSP-II satisfy
	\begin{equation}\label{gcpd}
	{}^{f}\mathscr{D}_t \bar{p}_f(n,t)=\Lambda\big(\bar{p}_f(n-1,t)-\bar{p}_f(n,t)\big)-\bar{\Lambda}\big(\bar{p}_f(n,t)-\bar{p}_f(n+1,t)\big), \ \ n \in \mathbb{Z},
\end{equation}
	with initial conditions 
	$$\bar{p}_f(n,0) =
	\begin{cases} 
		1, & n = 0, \\
		0, & n \neq 0,
	\end{cases}$$
	where ${}^{f}\mathscr{D}_t$ is the generalized Caputo derivative with respect to the Bern\v{s}tein function
	$f$ defined in \eqref{gcaputo}.
\end{theorem}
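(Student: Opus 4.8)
The plan is to start from the subordination representation of the marginal pmf, namely
$$\bar{p}_f(n,t) = \int_0^\infty p(n,x)\, l_f(x,t)\, \mathrm{d}x,$$
where $p(n,x)$ is the GSP pmf from \eqref{pmfgsp} and $l_f(x,t)$ is the density of $\{H_f(t)\}_{t\ge0}$. First I would apply the generalized Riemann--Liouville derivative ${}^{f}\mathbb{D}_t$ to both sides and interchange it with the $x$-integral, so that ${}^{f}\mathbb{D}_t\bar{p}_f(n,t)=\int_0^\infty p(n,x)\,{}^{f}\mathbb{D}_t l_f(x,t)\,\mathrm{d}x$. Invoking the governing equation \eqref{2.28} for the inverse-subordinator density, which holds under Condition \textbf{I}, this becomes $-\int_0^\infty p(n,x)\,\tfrac{\partial}{\partial x} l_f(x,t)\,\mathrm{d}x$.

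The next step is integration by parts in $x$. Using $l_f(x,t)\to 0$ as $x\to\infty$ together with the boundary value $l_f(0,t)=\nu(t)$ and $p(n,0)=\delta_{n,0}$, the boundary contribution equals $p(n,0)\nu(t)=\delta_{n,0}\nu(t)$, and the remaining integral is $\int_0^\infty l_f(x,t)\,\tfrac{\mathrm{d}}{\mathrm{d}x}p(n,x)\,\mathrm{d}x$. Here I would substitute the forward Kolmogorov system \eqref{digsp} for the GSP, namely $\tfrac{\mathrm{d}}{\mathrm{d}x}p(n,x)=\Lambda\big(p(n-1,x)-p(n,x)\big)-\bar{\Lambda}\big(p(n,x)-p(n+1,x)\big)$, and integrate term-by-term against $l_f(x,t)$ to recover $\Lambda\big(\bar{p}_f(n-1,t)-\bar{p}_f(n,t)\big)-\bar{\Lambda}\big(\bar{p}_f(n,t)-\bar{p}_f(n+1,t)\big)$. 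This yields
$${}^{f}\mathbb{D}_t\bar{p}_f(n,t)=\delta_{n,0}\nu(t)+\Lambda\big(\bar{p}_f(n-1,t)-\bar{p}_f(n,t)\big)-\bar{\Lambda}\big(\bar{p}_f(n,t)-\bar{p}_f(n+1,t)\big).$$

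To convert the Riemann--Liouville derivative into the generalized Caputo derivative I would finally apply the identity \eqref{2.27}, ${}^{f}\mathbb{D}_t w(t)={}^{f}\mathscr{D}_t w(t)+\nu(t)w(0)$, with $w(t)=\bar{p}_f(n,t)$. Since $\bar{\mathcal{Z}}_f(0)=\mathcal{S}(H_f(0))=\mathcal{S}(0)=0$ a.s., the initial value is $\bar{p}_f(n,0)=\delta_{n,0}$, so $\nu(t)w(0)=\delta_{n,0}\nu(t)$. Substituting and cancelling the two $\delta_{n,0}\nu(t)$ terms gives exactly \eqref{gcpd}, while the stated initial conditions are precisely $\bar{p}_f(n,0)=\delta_{n,0}$.

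The main obstacle I anticipate is analytic rather than algebraic: rigorously justifying the interchange of the operator ${}^{f}\mathbb{D}_t$ with the $x$-integral and the validity of the integration by parts. This requires controlling the behaviour of $l_f(x,t)$ and $\tfrac{\partial}{\partial x}l_f(x,t)$ at both endpoints, so that the boundary term at infinity genuinely vanishes and the term at $0$ equals $\delta_{n,0}\nu(t)$; Condition \textbf{I} and the absolute continuity of $\nu$ are exactly what guarantee this. The term-by-term integration against $l_f(x,t)$ and the finiteness of the quantities involved should follow from the boundedness of the GSP pmf together with the finiteness of all moments $\mathbb{E}\big(H_f^r(t)\big)$ noted earlier.
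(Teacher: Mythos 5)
Your proposal is correct and follows essentially the same route as the paper's proof: apply ${}^{f}\mathbb{D}_t$ under the integral of the subordination representation, use \eqref{2.28}, integrate by parts with the boundary value $l_f(0,t)=\nu(t)$, substitute \eqref{digsp}, and cancel the $\nu(t)$ term via \eqref{2.27}. The only cosmetic difference is that you justify $\bar{p}_f(n,0)=\delta_{n,0}$ from the a.s.\ identity $\mathcal{S}(H_f(0))=0$, whereas the paper computes it as $\int_0^\infty p(n,x)\,\delta(x)\,\mathrm{d}x=p(n,0)$; both are fine.
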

\begin{proof}
	From \eqref{loppp1}, we have
	\begin{equation}\label{4.4}
		\bar{p}_{f}(n,t) = \int_0^{\infty} p(n,x) l_f(x,t)\mathrm{d}x, 
	\end{equation}
	where $l_f(x,t)$ is the density of inverse subordinator $\{H_f(t)\}_{t \ge0}$. 
	
	On taking the generalized Riemann-Liouville derivative ${}^{f}\mathbb{D}_t$  and using 
	\eqref{2.28} in \eqref{4.4}, we get
	\begin{align}\label{33}
		{}^{f}\mathbb{D}_t \bar{p}_f(n,t) &= 
		-\int_0^\infty p(n,x) \frac{\partial}{\partial x} l_f(x,t)\mathrm{d}x \nonumber\\
		&= - \big[ p(n,x) l_f(x,t)\big]_{x=0}^{x=\infty}+ \int_0^\infty  l_f(x,t)\frac{\mathrm{d}}{\mathrm{d} x}  p(n,x) \, \mathrm{d}x\nonumber \\
		&= p(n,0) \nu(t)+\int_0^\infty l_f(x,t)\big(\Lambda(p(n-1,x)-p(n,x))- \nonumber\\& \hspace{5cm}\ \ \bar{\Lambda}(p(n,x)-p(n+1,x))\big) \mathrm{d}x , \ \  (\text{using} \ \  \eqref{digsp})\nonumber\\
		&=p(n,0) \nu(t) + \Lambda\big(\bar{p}_f(n-1,t)-\bar{p}_f(n,t)\big)-\bar{\Lambda}\big(\bar{p}_f(n,t)-\bar{p}_f(n+1,t)\big).
	\end{align}
	From \eqref{2.27}, we get
	\begin{equation}\label{000}
		{}^{f}\mathscr{D}_t \bar{p}_f(n,t) = {}^{f}\mathbb{D}_t \bar{p}_f(n,t) - \bar{p}_f(n,0)\nu(t), 
	\end{equation}
	where
	$$
	\bar{p}_f(n,0) = \int_0^\infty p(n,x) l_f(x, 0)\mathrm{d}x = \int_0^\infty p(n,x) \delta(x)\mathrm{d}x= p(n,0).
	$$
	Finally, we get the required result on using \eqref{000} in \eqref{33}.
\end{proof}
\begin{remark}
For $f(s)=s^\alpha$, $ 0<\alpha <1$, the generalized Caputo derivative reduces to (see Toaldo (2015), Remark 2.6)
	\begin{equation*}
		{}^{f}\mathscr{D}_{t} w(t)=\frac{\mathrm{d}^\alpha}{\mathrm{d} t^\alpha}w(t),
	\end{equation*}
	where $\dfrac{\mathrm{d}^\alpha}{\mathrm{d} t^\alpha}$ is the Caputo fractional derivative. Hence, the differential equation \eqref{gcpd} reduces to the governing differential equation for the state probabilities of the GFSP (see Kataria and Khandakar (2022b), Eq. (3.8)).
\end{remark}
Next, we discuss two particular cases of the TCGSP-II.
\subsection{GSP time-changed by the inverse TSS}
Recall that  $\{\mathscr{D}_{\eta,\theta}(t)\}_{t\ge0}$  is a TSS  with stability index  $0<\theta<1$   and  tempering parameter $\eta>0$. Its first passage time, that is,  
\begin{equation*}
	\mathscr{L}_{\eta,\theta}(t)\coloneqq \inf \{ r\ge 0 : \mathscr{D}_{\eta,\theta}(r)>t\},\ \ t\ge0,
\end{equation*}
is called the inverse TSS. 
 
The following result will be used for the density function $ l_{\eta,\theta}(x,t)$ of $\{\mathscr{L}_{\eta,\theta}(t)\}_{t\ge0}$ (see Kumar {\it et al.} (2019), Eq. (25)):
\begin{equation}\label{qmk11}
	\frac{\partial}{\partial x}l_{\eta,\theta}(x,t)=-\Big(\eta+\frac{\partial}{\partial t}\Big)^{\theta}l_{\eta,\theta}(x,t)+\eta^{\theta} l_{\eta,\theta}(x,t)-t^{-\theta}E^{1-\theta}_{1,1-\theta}(-\eta t)\delta_0(x),
\end{equation}
where $\delta_0(x)=l_{\eta,\theta}(x,0)$ and $E^{1-\theta}_{1,1-\theta}(\cdot)$ is the three-parameter Mittag-Leffler function defined in \eqref{mitag}.

We define the following subordinated process:
\begin{equation}\label{inv1}
	\bar{\mathcal{Z}}_{f_{2}}(t)\coloneqq \mathcal{S}(\mathscr{L}_{\eta,\theta} (t)),\ \ t\ge0,
\end{equation}
where the GSP is independent of $\{\mathscr{L}_{\eta,\theta}(t)\}_{t\ge0}$ and  $f_{2}$ is given in (\ref{bs1}).
\begin{proposition}
	The pmf $\bar{p}_{f_{2}}(n,t)=\mathrm{Pr}\{\bar{\mathcal{Z}}_{f_2}(t)=n\}$, $n\in \mathbb{Z}$ solves the following differential equation:
	\begin{align*}
		\Big(\eta+\frac{\mathrm{d}}{\mathrm{d} t}\Big)^{\theta}\bar{p}_{f_{2}}(n,t)&=(\eta^{\theta}-\Lambda-\bar{\Lambda})\bar{p}_{f_{2}}(n,t)+\Lambda\bar{p}_{f_{2}}(n-1,t)+\bar{\Lambda}\bar{p}_{f_{2}}(n+1,t)\\
		&\ \ \  -t^{-\theta}E^{1-\theta}_{1,1-\theta}(-\eta t)p(n,0)+p(n,x)l_{\eta,\theta}(x,t)\big|_{x=0}.
	\end{align*}
\end{proposition}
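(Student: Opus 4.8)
The plan is to follow the same conditioning-and-integration-by-parts scheme used for the earlier special cases (for instance, the forward-TSS and IGS propositions). First I would write the pmf of the TCGSP-II by conditioning on the inverse TSS,
\begin{equation*}
	\bar{p}_{f_2}(n,t) = \int_0^\infty p(n,x)\, l_{\eta,\theta}(x,t)\,\mathrm{d}x,
\end{equation*}
where $p(n,x)$ is the GSP pmf from \eqref{pmfgsp}. The idea is then to apply the operator $\big(\eta+\frac{\mathrm{d}}{\mathrm{d}t}\big)^{\theta}$ to both sides, move it inside the integral, and use the governing equation \eqref{qmk11} for $l_{\eta,\theta}$ to trade the $t$-operator for an $x$-derivative.

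Solving \eqref{qmk11} for the nonlocal $t$-operator gives
\begin{equation*}
	\Big(\eta+\frac{\partial}{\partial t}\Big)^{\theta} l_{\eta,\theta}(x,t) = -\frac{\partial}{\partial x} l_{\eta,\theta}(x,t) + \eta^{\theta} l_{\eta,\theta}(x,t) - t^{-\theta}E^{1-\theta}_{1,1-\theta}(-\eta t)\,\delta_0(x),
\end{equation*}
which I would substitute into the integral, splitting the right-hand side into three pieces. The $\eta^{\theta}$-piece immediately returns $\eta^{\theta}\bar{p}_{f_2}(n,t)$ once the conditioning integral is recognized, and the $\delta_0$-piece collapses to $-t^{-\theta}E^{1-\theta}_{1,1-\theta}(-\eta t)\,p(n,0)$ by the sifting property. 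The main work is the term $-\int_0^\infty p(n,x)\frac{\partial}{\partial x}l_{\eta,\theta}(x,t)\,\mathrm{d}x$: integrating by parts produces the boundary contribution $p(n,x)l_{\eta,\theta}(x,t)\big|_{x=0}$ (the upper limit vanishing since the density decays at infinity) together with $\int_0^\infty l_{\eta,\theta}(x,t)\frac{\mathrm{d}}{\mathrm{d}x}p(n,x)\,\mathrm{d}x$. Here I would substitute the GSP forward equation \eqref{digsp}, namely $\frac{\mathrm{d}}{\mathrm{d}x}p(n,x)=\Lambda(p(n-1,x)-p(n,x))-\bar{\Lambda}(p(n,x)-p(n+1,x))$, and identify each resulting integral as a shifted pmf $\bar{p}_{f_2}(n\pm1,t)$. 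Collecting the $\bar{p}_{f_2}(n,t)$ terms coming from the $\eta^{\theta}$ contribution and from the $-\Lambda-\bar{\Lambda}$ contribution into the single coefficient $(\eta^{\theta}-\Lambda-\bar{\Lambda})$ then yields the stated equation.

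The main obstacle is analytic rather than conceptual: justifying that the fractional operator $\big(\eta+\frac{\mathrm{d}}{\mathrm{d}t}\big)^{\theta}$ commutes with the $x$-integration, and that the boundary term at $x\to\infty$ genuinely vanishes. Unlike the forward-TSS case, where $\lim_{x\to 0}h_{\eta,\theta}(x,t)=0$ eliminated the lower boundary term outright, the inverse-TSS density need not vanish at the origin, so the surviving value $p(n,x)l_{\eta,\theta}(x,t)\big|_{x=0}$ must be carried through the computation rather than discarded. This is precisely why that term, together with the Mittag-Leffler correction arising from the $\delta_0$-contribution in \eqref{qmk11}, appears explicitly in the final statement.
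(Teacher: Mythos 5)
Your proposal is correct and follows essentially the same route as the paper's proof: conditioning on the inverse TSS, rearranging \eqref{qmk11} for the operator $\big(\eta+\frac{\mathrm{d}}{\mathrm{d}t}\big)^{\theta}$, sifting out the $\delta_0$-term, integrating the $\partial_x$-term by parts (with the upper boundary term killed by $\lim_{x\to\infty}l_{\eta,\theta}(x,t)=0$, which the paper justifies by citing Alrawashdeh \emph{et al.} (2017), Lemma 4.6), and then substituting \eqref{digsp}. Your closing observation about why the $x=0$ boundary term must be retained, in contrast with the forward-TSS case, matches the structure of the stated result exactly.
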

\begin{proof}
	From (\ref{inv1}), we have
	\begin{equation}\label{gfdt}
		\bar{p}_{f_{2}}(n,t)=\int_{0}^{\infty}p(n,x)l_{\eta,\theta}(x,t)\,\mathrm{d}x.
	\end{equation}
	From (\ref{qmk11}) and (\ref{gfdt}), we get
	\begin{align*}
		\Big(\eta+\frac{\mathrm{d}}{\mathrm{d} t}\Big)^{\theta}\bar{p}_{f_{2}}(n,t)&=\int_{0}^{\infty}p(n,x)\Big(\eta^{\theta} l_{\eta,\theta}(x,t)-t^{-\theta}E^{1-\theta}_{1,1-\theta}(-\eta t)\delta_0(x)-\frac{\partial}{\partial x}l_{\eta,\theta}(x,t)\Big)\,\mathrm{d}x \\
		&=\eta^{\theta}\bar{p}_{f_{2}}(n,t)-t^{-\theta}E^{1-\theta}_{1,1-\theta}(-\eta t)\int_{0}^{\infty}p(n,x)\delta_0(x)\,\mathrm{d}x  \\
		&\ \ \ +p(n,x)l_{\eta,\theta}(x,t)\big|_{x=0}+\int_{0}^{\infty}l_{\eta,\theta}(x,t)\frac{\mathrm{d}}{\mathrm{d}x}p(n,x)\,\mathrm{d}x  \\
		&=\eta^{\theta}\bar{p}_{f_{2}}(n,t)-t^{-\theta}E^{1-\theta}_{1,1-\theta}(-\eta t)p(n,0)+p(n,x)l_{\eta,\theta}(x,t)\big|_{x=0}  \\
		&\ \ \ +\int_{0}^{\infty}\big(\Lambda(p(n-1,x)-p(n,x))-\bar{\Lambda}(p(n,x)-p(n+1,x))\big)l_{\eta,\theta}(x,t)\,\mathrm{d}x,
	\end{align*}
	where in the last step we have used \eqref{digsp} and in the penultimate step we have used $\lim_{x\to \infty}l_{\eta,\theta}(x,t)=0$ (see Alrawashdeh {\it et al.} (2017), Lemma 4.6). 
	
	We get our desired result on using (\ref{gfdt}).
\end{proof}
Next, we obtain another differential equation for the pmf $\bar{p}_{f_{2}}(n,t)$ for a special case of the stability index in inverse TSS.  Note that using forward and backward shift operator, \eqref{digsp} can be rewritten as
\begin{equation}\label{t=0}
	\frac{\mathrm{d}}{\mathrm{d}t}p(n,t)=\big(\Lambda(B-1)+\bar{\Lambda}(F-1)\big)p(n,t),
\end{equation}
where $B$ is the backward shift operator, that is, $Bp(n,t)=p(n-1,t)$ and $F$ is the forward shift operator, that is, $Fp(n,t)=p(n+1,t)$. Thus,
\begin{equation}\label{fb}
	\frac{\mathrm{d}^j}{\mathrm{d}t^j}p(n,t)=\big(\Lambda(B-1)+\bar{\Lambda}(F-1)\big)^jp(n,t), \ \ \forall j\ge 1.
\end{equation}
\begin{proposition}
For the integer valued stability index $m = 1/\theta \geq 2$ of inverse TSS, the pmf $\bar{p}_{f_{2}}(n,t)$, $n \in \mathbb{Z}$ solves 
	\begin{align*}
		\frac{\mathrm{d}}{\mathrm{d}t}\bar{p}_{f_2}(n,t) &=\sum_{j=1}^{m}\sum_{k=1}^{j} \binom{m}{j} \eta^{(1-j/m)} (-1)^{j+k} \frac{\mathrm{d}^{k-1}}{\mathrm{d}x^{k-1}} p(n,x) 
		\frac{\partial^{j-k}}{\partial x^{j-k}} l_{\eta,1/m}(x,t) \Big|_{x=0}
		\\& \ \ + \sum_{j=1}^{m}\binom{m}{j} \eta^{(1-j/m)}(-1)^j \big(\Lambda(B-1)+\bar{\Lambda}(F-1)\big)^j\bar{p}_{f_2}(n,t)- \delta_0(t)\bar{p}_{f_2}(n,0),
	\end{align*}
	where $l_{\eta,{1}/{m}}(x,t)$ is the density of the inverse TSS, $B$ is the backward shift operator and $F$ is the forward shift operator.
\end{proposition}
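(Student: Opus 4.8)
The plan is to start from the integral representation that defines the subordinated process, namely $\bar{p}_{f_{2}}(n,t)=\int_{0}^{\infty}p(n,x)\,l_{\eta,1/m}(x,t)\,\mathrm{d}x$ as in \eqref{gfdt}, and to convert the governing equation \eqref{qmk11} for the inverse-TSS density into a genuine first-order equation in $t$. The crucial simplification offered by the hypothesis $m=1/\theta\in\{2,3,\dots\}$ is that the fractional operator becomes an exact $m$-th root: $\big(\eta+\tfrac{\partial}{\partial t}\big)^{1/m}$ iterated $m$ times equals $\eta+\tfrac{\partial}{\partial t}$. I would therefore read \eqref{qmk11} for $x>0$ (where $\delta_0(x)$ drops out) as the operator identity $\big(\eta+\tfrac{\partial}{\partial t}\big)^{1/m}l_{\eta,1/m}=\big(\eta^{1/m}-\tfrac{\partial}{\partial x}\big)l_{\eta,1/m}$, apply $\big(\eta+\tfrac{\partial}{\partial t}\big)^{1/m}$ repeatedly, and use that this $t$-operator commutes with $\tfrac{\partial}{\partial x}$ to obtain $\big(\eta+\tfrac{\partial}{\partial t}\big)l_{\eta,1/m}=\big(\eta^{1/m}-\tfrac{\partial}{\partial x}\big)^{m}l_{\eta,1/m}$.

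Next I would expand the $m$-th power by the binomial theorem, $\big(\eta^{1/m}-\tfrac{\partial}{\partial x}\big)^{m}=\sum_{j=0}^{m}\binom{m}{j}\eta^{1-j/m}(-1)^{j}\tfrac{\partial^{j}}{\partial x^{j}}$; the $j=0$ term is exactly $\eta$ and cancels the $\eta\,l_{\eta,1/m}$ on the left, leaving $\tfrac{\partial}{\partial t}l_{\eta,1/m}$ written as a sum over $j=1,\dots,m$ of $x$-derivatives of the density. Multiplying by $p(n,x)$, integrating over $(0,\infty)$, and differentiating under the integral sign turns the left-hand side into $\tfrac{\mathrm{d}}{\mathrm{d}t}\bar{p}_{f_{2}}(n,t)$. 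I would then integrate each term by parts $j$ times in $x$: the derivatives are transferred onto $p(n,x)$, producing a bulk integral $\int_{0}^{\infty}\big(\tfrac{\partial^{j}}{\partial x^{j}}p(n,x)\big)l_{\eta,1/m}\,\mathrm{d}x$ together with boundary contributions evaluated at $x=0$ (those at $x=\infty$ vanishing by the decay $l_{\eta,\theta}(x,t)\to0$). For the bulk term I would invoke \eqref{fb}, with $x$ playing the role of the time variable, to replace $\tfrac{\partial^{j}}{\partial x^{j}}p(n,x)$ by $\big(\Lambda(B-1)+\bar{\Lambda}(F-1)\big)^{j}p(n,x)$ and pull the $n$-shift operators $B,F$ outside the integral, recovering $\big(\Lambda(B-1)+\bar{\Lambda}(F-1)\big)^{j}\bar{p}_{f_{2}}(n,t)$; the boundary data assemble into the double sum over $1\le k\le j\le m$ with the factors $\binom{m}{j}\eta^{1-j/m}$ and the signs dictated by the binomial expansion and the repeated integration by parts.

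The delicate point, and where I expect the real work to lie, is the careful accounting of the singular and initial data. The iteration above is clean only on $\{x>0\}$; each application of \eqref{qmk11} reintroduces the source $-t^{-1/m}E^{1-1/m}_{1,1-1/m}(-\eta t)\delta_0(x)$, and these accumulated $\delta_0(x)$ pieces, together with the initial value $\bar{p}_{f_{2}}(n,0)=p(n,0)$ inherited from \eqref{gfdt}, must be tracked to produce the final $-\delta_0(t)\bar{p}_{f_{2}}(n,0)$ term. I would handle this by keeping the $x=0$ boundary data explicit throughout the integration by parts rather than discarding it, and by matching the outcome against the $t\to0^{+}$ behaviour forced by the initial condition. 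A secondary technical obstacle is the rigorous justification of the commutation of $\big(\eta+\tfrac{\partial}{\partial t}\big)^{1/m}$ with $\tfrac{\partial}{\partial x}$ and of differentiation under the integral sign, both of which rest on the smoothness and decay of $l_{\eta,1/m}(x,t)$; once these are in place, collecting the bulk, boundary, and singular contributions yields the stated differential equation.
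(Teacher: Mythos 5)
Your main computation coincides with the paper's proof: differentiate $\bar{p}_{f_2}(n,t)=\int_0^\infty p(n,x)\,l_{\eta,1/m}(x,t)\,\mathrm{d}x$ under the integral sign, insert an integer-order PDE expressing $\frac{\partial}{\partial t}l_{\eta,1/m}$ through $x$-derivatives, integrate by parts $j$ times (boundary contributions at $x=0$, decay at $x=\infty$ from Alrawashdeh \emph{et al.} (2017)), and convert the bulk integrals via \eqref{fb} into powers of $\Lambda(B-1)+\bar{\Lambda}(F-1)$ acting on $\bar{p}_{f_2}(n,t)$. The difference is where that integer-order PDE comes from. The paper does not derive it: it cites Kumar \emph{et al.} (2011), Eq.\ (4.7), which states directly that
\begin{equation*}
\frac{\partial}{\partial t} l_{\eta,1/m}(x,t) + \delta_0(t)\delta_0(x) = \sum_{j=1}^m (-1)^j\binom{m}{j}\eta^{(1-j/m)}\frac{\partial^j}{\partial x^j} l_{\eta,1/m}(x,t),
\end{equation*}
so the singular term $\delta_0(t)\delta_0(x)$ --- the source of $-\delta_0(t)\bar{p}_{f_2}(n,0)$ in the proposition --- is already packaged into the cited equation.

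That is precisely where your proposal has a genuine gap: you attempt to re-derive this PDE from \eqref{qmk11} by applying $\big(\eta+\frac{\partial}{\partial t}\big)^{1/m}$ iteratively $m$ times, and two steps of that derivation are asserted rather than established. First, the composition rule $\big(\big(\eta+\frac{\partial}{\partial t}\big)^{1/m}\big)^m = \eta+\frac{\partial}{\partial t}$: for Riemann--Liouville-type (tempered) fractional operators the semigroup property generally fails by correction terms concentrated at $t=0$, and those corrections are exactly of delta type, so they cannot be dismissed --- they are part of where $\delta_0(t)\delta_0(x)$ comes from. Second, each iteration reintroduces the source $-t^{-1/m}E^{1-1/m}_{1,1-1/m}(-\eta t)\delta_0(x)$ acted on by further fractional operators, and one must show the accumulated singular pieces collapse to exactly $-\delta_0(t)\delta_0(x)$; your plan (``keep the $x=0$ data explicit and match the $t\to 0^+$ behaviour'') names this difficulty without resolving it, and getting it wrong loses or corrupts precisely the $-\delta_0(t)\bar{p}_{f_2}(n,0)$ term in the statement. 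The clean repair is what the paper does: invoke the integer-order equation of Kumar \emph{et al.} (2011) as a known result, together with the infinite differentiability of $l_{\eta,1/m}$ (Meerschaert and Scheffler (2008)) needed to justify the $j$-fold integration by parts; after that substitution, the rest of your argument is exactly the paper's proof.
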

\begin{proof}
Note that, when $m = 1/\theta \geq 2$ is an integer, then the density $l_{\eta,{1}/{m}}(x,t)$ of the inverse TSS satisfies the following partial differential equation (see Kumar {\it et al.} (2011), Eq. (4.7)):
\begin{equation*}
	\frac{\partial}{\partial t}  l_{\eta,{1}/{m}}(x,t) + \delta_0(t) \delta_0(x) = \sum_{j=1}^{m}(-1)^j \binom{m}{j}\eta^{(1-{j}/{m})} \frac{\partial^{j}}{\partial x^{j}}  l_{\eta,{1}/{m}}(x,t), 
\end{equation*}
where $\delta_0(x) =  l_{\eta,{1}/{m}}(x,0)$. 	
	Here, $ l_{\eta,{1}/{m}}(x,t)$ is infinitely differentiable which follows from Theorem 3.1 of  Meerschaert and Scheffler (2008) and the fact that the density of TSS is infinitely differentiable. Also, we have $\lim_{x \to \infty}  l_{\eta,{1}/{m}}(x,t) = 0$ (see Alrawashdeh {\it et al.} (2017), Lemma 4.6).
	
	On using the above result in \eqref{gfdt} for $m = 1/\theta \geq 2$, we get
	\begin{align*}
		\frac{\mathrm{d}}{\mathrm{d}t} \bar{p}_{f_2}(n,t) &= \int_{0}^{\infty} p(n,x)\bigg( 
		\sum_{j=1}^{m} \binom{m}{j} \eta^{(1-j/m)}(-1)^j \frac{\partial^j}{\partial x^j} 
		l_{\eta,1/m}(x,t) - \delta_0(t)\delta_0(x) \bigg)\mathrm{d}x\\
		&= \sum_{j=1}^{m} \binom{m}{j} \eta^{(1-j/m)} (-1)^j 
		\int_{0}^{\infty} p(n,x) \frac{\partial^j}{\partial x^j} l_{\eta,1/m}(x,t) \mathrm{d}x 
		- \delta_0(t) p(n,0)\\
		&= \sum_{j=1}^{m} \binom{m}{j} \eta^{(1-j/m)} (-1)^j \times \Big( 
		\sum_{k=1}^{j} (-1)^k \frac{\mathrm{d}^{k-1}}{\mathrm{d}x^{k-1}} p(n,x) 
		\frac{\partial^{j-k}}{\partial x^{j-k}} l_{\eta,1/m}(x,t) \Big|_{x=0}
		\\& \ \ + (-1)^j \int_{0}^{\infty} l_{\eta,1/m}(x,t) \frac{\mathrm{d}^j}{\mathrm{d} x^j} p(n,x)\mathrm{d}x \Big)- \delta_0(t){p}(n,0)\\
		&=\sum_{j=1}^{m} \binom{m}{j} \eta^{(1-j/m)} (-1)^j \times \Big( 
		\sum_{k=1}^{j} (-1)^k \frac{\mathrm{d}^{k-1}}{\mathrm{d}x^{k-1}} p(n,x) 
		\frac{\partial^{j-k}}{\partial x^{j-k}} l_{\eta,1/m}(x,t) \Big|_{x=0}
		\\& \ \ + (-1)^j \big(\Lambda(B-1)+\bar{\Lambda}(F-1)\big)^j\int_{0}^{\infty} l_{\eta,1/m}(x,t) p(n,x)\mathrm{d}x \Big)- \delta_0(t){p}(n,0),
	\end{align*}
	where in the last step we have used \eqref{fb}. Thus, the proof is completed on using $\bar{p}_{f_2}(n,0)=p(n,0)$ and \eqref{gfdt}.

\end{proof}

\subsection{GSP time-changed by the first passage time of IGS}
Consider an IGS  $\{Y(t)\}_{t\geq0}$ with parameters $\delta > 0$, $\gamma> 0$ whose associated Bern\v stein function $f_{3}$ is given in (\ref{ploiuy67}). The first passage time $\{H(t)\}_{t\ge0}$ of IGS is defined as
\begin{equation*}
	H(t)\coloneqq \inf \{ r\ge 0 : Y(r)>t\},\ \ t\geq0.
\end{equation*}
Let us consider the following time-changed process:
\begin{equation}\label{inv}
	\bar{\mathcal{Z}}_{f_{3}}(t)\coloneqq \mathcal{S}(H (t)),\ \ t\ge0,
\end{equation}
where the GSP is independent of  $\{H(t)\}_{t\ge0}.$ 
\begin{proposition}
For $n\in \mathbb{Z}$, the pmf $\bar{p}_{f_3}(n,t)=\mathrm{Pr}\{\bar{\mathcal{Z}}_{f_{3}}(t)=n\}$   solves the following differential equation:
{\small	\begin{equation*}
		\delta\Big(\gamma^{2}+2\frac{\mathrm{d}}{\mathrm{d} t}\Big)^{1/2}\bar{p}_{f_{3}}(n,t)=\big(\delta\gamma-\Lambda-\bar{\Lambda}\big)\bar{p}_{f_{3}}(n,t)+\Lambda\bar{p}_{f_{3}}(n-1,t)+\bar{\Lambda}\bar{p}_{f_{3}}(n+1,t)-\delta\gamma \mathrm{Erf}\big(\gamma\sqrt{t/2}\big)\bar{p}_{f_3}(n,0),
	\end{equation*}}
where $\mathrm{Erf}(\cdot)$ is the error function.
	
	\begin{proof}
		From (\ref{inv}), we have
		\begin{equation}\label{gfdt1}
			\bar{p}_{f_{3}}(n,t)=\int_{0}^{\infty}p(n,x)h(x,t)\,\mathrm{d}x.
		\end{equation}
The following result will be used (see Wyloma\'nska {\it et al.} (2016), Eq. (2.22)):
\begin{equation*}
	\frac{\partial}{\partial x}h(x,t)=-\delta\Big(\gamma^{2}+2\frac{\partial}{\partial t}\Big)^{1/2}h(x,t)+\delta\gamma h(x,t)-\delta\sqrt{2/\pi t}e^{-\gamma^{2}t/2}\delta_0(x),
\end{equation*}
with the initial condition $h(x,0)=\delta_0(x)$. 
Using the above result in \eqref{gfdt1}, we get
{\small		\begin{align}\label{22}
			\delta\Big(\gamma^{2}+2\frac{\mathrm{d}}{\mathrm{d} t}\Big)^{1/2}\bar{p}_{f_{3}}(n,t)&=\int_0^{\infty} p(n,x)\Big(\delta\gamma h(x,t)-\delta\sqrt{2/\pi t}e^{-\gamma^{2}t/2}\delta_0(x)-\frac{\partial}{\partial x}h(x,t)\Big)\mathrm{d}x \nonumber\\
			&=\delta\gamma\bar{p}_{f_{3}}(n,t)-\delta\sqrt{2/\pi t}e^{-\gamma^{2}t/2}p(n,0)-\int_0^{\infty} p(n,x)\frac{\partial}{\partial x}h(x,t)\mathrm{d}x.
		\end{align}	}
Also, we use the following limiting results  (see Vellaisamy and Kumar (2018)):
\begin{equation}\label{55}
	\lim_{x \to \infty} \frac{\partial}{\partial x} h(x, t) = \lim_{x \to \infty} h(x, t) = 0,
\end{equation}
to obtain		
	\begin{align}\label{inthxt}
		\int_0^{\infty} &p(n,x)\frac{\partial}{\partial x}h(x,t)\mathrm{d}x\nonumber\\
		&=\big[p(n, x) h(x, t)\big]_0^\infty-\int_0^{\infty}h(x,t)\frac{\mathrm{d}}{\mathrm{d}x}p(n,x)\mathrm{d}x\nonumber\\
		&=-p(n,0)h(0,t)-\int_0^{\infty}\Big(\Lambda\big(p(n-1,x)-p(n,x)\big)-\bar{\Lambda}\big(p(n,x)-p(n+1,x)\big)\Big)h(x,t)\mathrm{d}x\nonumber\\
		&=-\bar{p}_{f_3}(n,0)h(0,t) - \Lambda\bar{p}_{f_3}(n-1,t)+\big(\Lambda+\bar{\Lambda}\big)\bar{p}_{f_3}(n,t)-\bar{\Lambda}\bar{p}_{f_3}(n+1,t),
	\end{align}
where in the last step we have used $\bar{p}_{f_3}(n,0)=p(n,0)$ and (\ref{gfdt1}), and in the penultimate step we have used \eqref{digsp}.
		
		 Finally, the proof  follows on substituting \eqref{inthxt} in \eqref{22} along with the following result (see Vellaisamy and Kumar (2018), Proposition 2.2):
		\begin{equation*}
			\lim\limits_{x\to0}h(x,t)=h(0,t)=\delta e^{-\gamma^{2}t/2}\Big(\sqrt{2/\pi t}-\gamma  e^{\gamma^{2}t/2} \mathrm{Erf}\big(\gamma\sqrt{t/2}\big)\Big).
		\end{equation*}  
	\end{proof}
\end{proposition}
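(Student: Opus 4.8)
The plan is to use the subordination (mixture) representation of the marginal distribution and then convert the governing partial differential equation of the first passage time density into a spatial recurrence via integration by parts, exactly as in the preceding IGS and TSS propositions. Writing $h(x,t)$ for the density of $\{H(t)\}_{t\ge0}$, the independence in \eqref{inv} gives
\[
\bar{p}_{f_{3}}(n,t)=\int_{0}^{\infty}p(n,x)\,h(x,t)\,\mathrm{d}x,
\]
where $p(n,x)$ is the GSP pmf from \eqref{pmfgsp}. The engine of the argument will be the known governing equation for $h$ (see Wyloma\'nska {\it et al.} (2016)), of the form $\frac{\partial}{\partial x}h=-\delta(\gamma^{2}+2\frac{\partial}{\partial t})^{1/2}h+\delta\gamma h-\delta\sqrt{2/\pi t}\,e^{-\gamma^{2}t/2}\delta_0(x)$, together with the explicit boundary value $h(0,t)=\delta e^{-\gamma^{2}t/2}(\sqrt{2/\pi t}-\gamma e^{\gamma^{2}t/2}\mathrm{Erf}(\gamma\sqrt{t/2}))$ and the decay $\lim_{x\to\infty}h(x,t)=\lim_{x\to\infty}\frac{\partial}{\partial x}h(x,t)=0$.

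First I would apply the operator $\delta(\gamma^{2}+2\frac{\mathrm{d}}{\mathrm{d}t})^{1/2}$ to the integral representation, move it inside, and substitute the PDE for $h$, trading the operator for the three terms $\delta\gamma h$, the delta-source term, and $-\frac{\partial}{\partial x}h$. Integrating $p(n,x)$ against the source term produces $-\delta\sqrt{2/\pi t}\,e^{-\gamma^{2}t/2}p(n,0)$, while the $\delta\gamma h$ term yields $\delta\gamma\,\bar{p}_{f_{3}}(n,t)$. I would then integrate the $\frac{\partial}{\partial x}h$ contribution by parts in $x$: the boundary term at infinity vanishes by the decay above, the boundary term at $x=0$ contributes $p(n,0)h(0,t)$, and the remaining integral $\int_{0}^{\infty}h\,\frac{\mathrm{d}}{\mathrm{d}x}p(n,x)\,\mathrm{d}x$ is rewritten using the GSP system \eqref{digsp} and the mixture representation to give $\Lambda(\bar{p}_{f_{3}}(n-1,t)-\bar{p}_{f_{3}}(n,t))-\bar{\Lambda}(\bar{p}_{f_{3}}(n,t)-\bar{p}_{f_{3}}(n+1,t))$.

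The main obstacle will be the careful bookkeeping of the two $x=0$ contributions, since the stated error-function term arises only through a delicate cancellation. Inserting the explicit value of $h(0,t)$ gives $p(n,0)h(0,t)=p(n,0)(\delta\sqrt{2/\pi t}\,e^{-\gamma^{2}t/2}-\delta\gamma\,\mathrm{Erf}(\gamma\sqrt{t/2}))$, so the $\delta\sqrt{2/\pi t}\,e^{-\gamma^{2}t/2}p(n,0)$ piece coming from the source term cancels exactly against the one coming from this boundary term, leaving precisely $-\delta\gamma\,\mathrm{Erf}(\gamma\sqrt{t/2})\,p(n,0)$. Finally I would identify $\bar{p}_{f_{3}}(n,0)=\int_{0}^{\infty}p(n,x)\delta_0(x)\,\mathrm{d}x=p(n,0)$ and collect the $\bar{p}_{f_{3}}(n,t)$ terms into the coefficient $\delta\gamma-\Lambda-\bar{\Lambda}$, which reproduces the asserted differential equation. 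The only analytic points needing justification are the interchange of the fractional-type operator $\delta(\gamma^{2}+2\frac{\mathrm{d}}{\mathrm{d}t})^{1/2}$ with the integral and the validity of the $x=0$ boundary evaluation.
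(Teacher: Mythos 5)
Your proposal is correct and follows essentially the same route as the paper's own proof: the mixture representation, substitution of the Wyloma\'nska {\it et al.} governing equation for $h(x,t)$, integration by parts in $x$ using \eqref{digsp} and the decay of $h$ at infinity, and the identification $\bar{p}_{f_3}(n,0)=p(n,0)$ together with the explicit value of $h(0,t)$. In particular, the cancellation you highlight between the source term $-\delta\sqrt{2/\pi t}\,e^{-\gamma^{2}t/2}p(n,0)$ and the corresponding piece of the boundary contribution $p(n,0)h(0,t)$, leaving only $-\delta\gamma\,\mathrm{Erf}\big(\gamma\sqrt{t/2}\big)\bar{p}_{f_3}(n,0)$, is exactly the mechanism in the paper's argument.
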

Next, we obtain another governing differential equation for the pmf $\bar{p}_{f_3}(n,t)$ where the following results will be used (see Vellaisamy and Kumar (2018)):
\begin{align}
	\frac{\partial^2}{\partial x^2} h(x, t) 
	- 2\delta \gamma  \frac{\partial}{\partial x} h(x, t) 
	&= 2\delta^2 \frac{\partial}{\partial t} h(x, t) 
	+ 2\delta^2 h(x, 0) \delta_0(t),\label{pp}\\
	\lim_{x \to 0} \frac{\partial}{\partial x} h(x, t) &= 2\delta \gamma  h(0, t)\label{56}.
\end{align}   
\begin{proposition}
	The pmf $\bar{p}_{f_3}(n,t)$ satisfies the following:
	\begin{align*}
		\frac{\mathrm{d}}{\mathrm{d}t} \bar{p}_{f_3}(n, t) &=-\delta_o(t)\big)\bar{p}_{f_3}(n,0)+\frac{1}{2 \delta ^2}\big(\Lambda(B-1)+\bar{\Lambda}(F-1)\big)\bar{p}_{f_3}(n,0)h(0,t)\\
		&\ \ +\frac{1}{2 \delta ^2}\Big(\big(\Lambda(B-1)+\bar{\Lambda}(F-1)\big)\big(\big(\Lambda(B-1)+\bar{\Lambda}(F-1)\big)+2\delta\gamma\big)\Big)\bar{p}_{f_3}(n,t), \ \ n\in \mathbb{Z}. 
	\end{align*}
	\begin{proof} 
		From \eqref{gfdt1} and \eqref{pp}, we have
		\begin{align}\label{sc1}
			\frac{\mathrm{d}}{\mathrm{d}t} \bar{p}_{f_3}(n, t) &=\frac{1}{2\delta^2} \int_0^\infty p(n, x) \Big(\frac{\partial^2}{\partial x^2} h(x, t)
			-2\delta \gamma \frac{\partial}{\partial x} h(x, t)
			- 2\delta^2 h(x, 0) \delta_0(t)\Big)\mathrm{d}x\nonumber\\
			&= \frac{1}{2\delta^2} \int_0^\infty  p(n, x)\Big(\frac{\partial^2}{\partial x^2} h(x, t) 
			- 2\delta \gamma \frac{\partial}{\partial x} h(x, t) \Big) \mathrm{d}x - \delta_0(t) \bar{p}_{f_3}(n, 0). 
		\end{align}
From \eqref{fb}, we get		
			\begin{equation}\label{2d}
			\frac{\mathrm{d}^2}{\mathrm{d}x^2}p(n,x)=\big(\Lambda(B-1)+\bar{\Lambda}(F-1)\big)^2p(n,x).
		\end{equation}
		Also, for $t=0$, the equation \eqref{t=0} reduces to
		 \begin{equation}\label{t=00}
		 	\frac{\mathrm{d}}{\mathrm{d}t}p(n,t)\Big|_{t=0}=\big(\Lambda(B-1)+\bar{\Lambda}(F-1)\big)p(n,0).
		 \end{equation}
Observe that
		
		\begin{align}\label{sc2}
			\int_0^\infty &p(n, x) \frac{\partial^2}{\partial x^2} h(x, t) \,\mathrm{d}x\nonumber\\ 
			&=p(n, x) \frac{\partial}{\partial x} h(x, t) \Big|_0^\infty 
			- \int_0^\infty \frac{\mathrm{d}}{\mathrm{d}x} p(n, x) \frac{\partial}{\partial x} h(x, t) \mathrm{d}x \nonumber\\
			&= p(n, x) \frac{\partial}{\partial x} h(x, t) \Big|_0^\infty 
			- h(x, t) \frac{\mathrm{d}}{\mathrm{d} x} p(n, x) \Big|_0^\infty 
			+ \int_0^\infty h(x, t) \frac{\mathrm{d}^2}{\mathrm{d} x^2} p(n, x) \mathrm{d}x\nonumber\\
			&=-2\delta \gamma p(n,0)h(0,t)+\big(\Lambda(B-1)+\bar{\Lambda}(F-1)\big)p(n,0)h(0,t)\nonumber\\
			& \ \ + \big(\Lambda(B-1)+\bar{\Lambda}(F-1)\big)^2\int_{0}^\infty p(n,x)h(x,t)\mathrm{d}x, \ (\text{using}\ \eqref{55}, \eqref{56}, \eqref{2d}, \eqref{t=00} )\nonumber\\
			&=-2\delta \gamma h(0,t) \bar{p}_{f_3}(n,0)+\big(\Lambda(B-1)+\bar{\Lambda}(F-1)\big)\bar{p}_{f_3}(n,0)h(0,t)\nonumber\\
			&\ \ +\big(\Lambda(B-1)+\bar{\Lambda}(F-1)\big)^2\bar{p}_{f_3}(n,t),
		\end{align}
		where in the last step we have used $\bar{p}_{f_3}(n,0)=p(n,0)$. Moreover, from \eqref{inthxt}, we have
		\begin{equation}\label{sc3}
		\int_0^\infty p(n, x) \frac{\partial}{\partial x} h(x, t) \mathrm{d}x= -\bar{p}_{f_3}(n,0)h(0,t)-\big(\Lambda(B-1)+\bar{\Lambda}(F-1)\big)\bar{p}_{f_3}(n,t).
		\end{equation}
Finally, on substituting \eqref{sc2} and \eqref{sc3} in \eqref{sc1}, we get our desired result.
	\end{proof}
\end{proposition}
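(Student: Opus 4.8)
The plan is to begin from the integral representation $\bar{p}_{f_3}(n,t)=\int_0^\infty p(n,x)h(x,t)\,\mathrm{d}x$ (as in \eqref{gfdt1}) and differentiate once in $t$, passing the derivative under the integral sign. Using the governing equation \eqref{pp} for the inverse IGS density, I would solve for $\partial_t h$ and substitute, writing $\partial_t h=\frac{1}{2\delta^2}\big(\partial_{xx}h-2\delta\gamma\,\partial_x h\big)-h(x,0)\delta_0(t)$. Integrating the Dirac term against $p(n,x)$ and using $h(x,0)=\delta_0(x)$ immediately yields $-\delta_0(t)p(n,0)=-\delta_0(t)\bar{p}_{f_3}(n,0)$, which accounts for the first term on the right-hand side. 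This reduces the task to evaluating the two spatial integrals $\int_0^\infty p(n,x)\,\partial_{xx}h\,\mathrm{d}x$ and $\int_0^\infty p(n,x)\,\partial_x h\,\mathrm{d}x$.

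The first-order integral is already in hand: it was computed in \eqref{inthxt} and, after rewriting the Skellam difference operator through the shift operators $B$ and $F$, equals $-\bar{p}_{f_3}(n,0)h(0,t)-\big(\Lambda(B-1)+\bar{\Lambda}(F-1)\big)\bar{p}_{f_3}(n,t)$. The second-order integral is the real computation, and I would treat it by integrating by parts twice. The first integration by parts leaves the boundary term $-p(n,0)\lim_{x\to0}\partial_x h(x,t)$, which by \eqref{56} equals $-2\delta\gamma\,p(n,0)h(0,t)$, the contribution at infinity vanishing by \eqref{55}. A second integration by parts of the resulting $\int_0^\infty \partial_x p(n,x)\,\partial_x h\,\mathrm{d}x$ contributes a further boundary term $-\frac{\mathrm{d}}{\mathrm{d}x}p(n,x)\big|_{x=0}h(0,t)$ together with the bulk integral $\int_0^\infty \frac{\mathrm{d}^2}{\mathrm{d}x^2}p(n,x)\,h(x,t)\,\mathrm{d}x$, again using \eqref{55} at infinity.

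At this point the structural identities for the GSP pmf close the argument: viewing $x$ as the time variable of the GSP, \eqref{t=0} and \eqref{fb} (which rewrite \eqref{digsp}) give $\frac{\mathrm{d}}{\mathrm{d}x}p(n,x)\big|_{x=0}=\big(\Lambda(B-1)+\bar{\Lambda}(F-1)\big)p(n,0)$ and $\frac{\mathrm{d}^2}{\mathrm{d}x^2}p(n,x)=\big(\Lambda(B-1)+\bar{\Lambda}(F-1)\big)^2p(n,x)$, so the surviving bulk integral collapses back to $\big(\Lambda(B-1)+\bar{\Lambda}(F-1)\big)^2\bar{p}_{f_3}(n,t)$ via \eqref{gfdt1}. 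Assembling the two integrals into the differentiated identity, I expect the $\pm2\delta\gamma\,\bar{p}_{f_3}(n,0)h(0,t)$ contributions, one from the boundary of the second-order integral and one from $-2\delta\gamma$ times the first-order integral, to cancel, leaving precisely $\frac{1}{2\delta^2}\big(\Lambda(B-1)+\bar{\Lambda}(F-1)\big)\bar{p}_{f_3}(n,0)h(0,t)$ and $\frac{1}{2\delta^2}\big(\Lambda(B-1)+\bar{\Lambda}(F-1)\big)\big(\big(\Lambda(B-1)+\bar{\Lambda}(F-1)\big)+2\delta\gamma\big)\bar{p}_{f_3}(n,t)$, as claimed. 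The step I expect to require the most care is the double integration by parts: correctly identifying $\partial_x p(n,x)\big|_{x=0}$ through \eqref{t=0} and verifying that every $x\to\infty$ boundary term genuinely vanishes by \eqref{55}, since a single sign slip would destroy the cancellation that produces the clean final form.
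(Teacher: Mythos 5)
Your proposal is correct and follows essentially the same route as the paper's own proof: differentiate under the integral and substitute \eqref{pp} for $\partial_t h$ (producing the $-\delta_0(t)\bar{p}_{f_3}(n,0)$ term), reuse \eqref{inthxt} for $\int_0^\infty p\,\partial_x h\,\mathrm{d}x$, evaluate $\int_0^\infty p\,\partial_{xx} h\,\mathrm{d}x$ by double integration by parts with \eqref{55}, \eqref{56}, \eqref{t=0} and \eqref{fb}, and exploit the cancellation of the $2\delta\gamma\,\bar{p}_{f_3}(n,0)h(0,t)$ terms. The one point needing care, which you flagged yourself, is the sign bookkeeping in the second integration by parts: the boundary term enters the full second-order integral as $+\frac{\mathrm{d}}{\mathrm{d}x}p(n,x)\big|_{x=0}h(0,t)$ once the outer minus sign is applied, and your stated final cancellation and assembled result are consistent with this.
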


\end{document}